\definecolor{darkblue}{rgb}{.8,.15,.15}
\definecolor{darkgreen}{rgb}{0.15,.4,.5}
\newcommand\NN{{\mathbb N}} 
\newcommand\RR{{\mathbb R}} 
\newcommand\GG{{\mathbb G}}
\newcommand\FF{{\mathcal F}}
\newcommand\PP{{\mathcal P}}
\newcommand\x{{\mathbf x}}
\newcommand\e{{\mathbf e}}
\def\opn#1#2{\def#1{\operatorname{#2}}}
\opn\aff{aff}
\opn\conv{conv}
\opn\cone{cone}
\opn\rank{rank}
\opn\hom{hom}
\opn\maximum{max}
\opn\max{max}
\opn\minimum{min}
\opn\Pr{{\rm Pr}}
\opn\windmill{Wd}
\opn\ex{ex}
\opn\z{z}
\theoremstyle{plain}
\newtheorem{theorem}{Theorem}
\newtheorem{lemma}[theorem]{Lemma}
\newtheorem{corollary}[theorem]{Corollary}
\newtheorem{proposition}[theorem]{Proposition}
\theoremstyle{definition}
\newtheorem{definition}[theorem]{Definition}
\newtheorem{example}[theorem]{Example}
\newtheorem*{remark*}{Remark}
\begin{document}

\title{Extremal edge polytopes} 

\author{Tuan Tran}
\address{Institute of Mathematics, Freie Universität Berlin, Arnimallee 3, D-14195 Berlin, Germany}
\email{tuan@math.fu-berlin.de} 

\author{Günter M. Ziegler}
\address{Institute of Mathematics, Freie Universität Berlin, Arnimallee 2, D-14195 Berlin, Germany}
\email{ziegler@math.fu-berlin.de}

\thanks{The first author was supported by DFG within the Research Training Group ``Methods for Discrete Structures''; the research of
the second author was also funded by the European Research Council under the European Union's Seventh Framework Programme (FP7/2007-2013) / ERC Grant agreement no.~247029-SDModels.}

\begin{abstract}
The \emph{edge polytope} of a finite graph $G$ is the convex hull of the columns of its vertex-edge 
incidence matrix.  We study extremal problems for this class of polytopes. For $k \in \{2,3,5\}$, we determine the 
maximum number of vertices of $k$-neighborly edge polytopes up to a sublinear term.    
We also construct a family of edge polytopes with exponentially-many facets. 
\end{abstract}
\maketitle

\section{Introduction} 
 
 The main object of our investigation is a special class of $0/1$-polytopes (cf.~\cite{Z67}): 
 The \emph{edge polytope} $\PP(G)$ of a graph $G$ on the vertex set $\{1,2,\dots,n\}$
 is the polytope generated by all vectors $\e_i + \e_j$ such that $i$ is adjacent to $j$, where $\e_i$ and $\e_j$ stand for the $i$th and $j$th unit vectors of $\RR^n$. For example, the edge polytopes of trees are simplices, while the
 edge polytope of the complete graph~$K_n$ is the second hypersimplex $\Delta_{n-1}(2)$.
Thus the edge polytopes are the subpolytopes of the second hypersimplex. 
 A study of edge polytopes of general graphs was initiated by Ohsugi \& Hibi \cite{OH} and Villarreal \cite{Vil}, who 
 both provided the half-space description of these polytopes (cf.~Theorem~\ref{thm:the_facets}). 
 Dupont \& Villarreal \cite{DV} have recently connected this to the setting Rees Algebras in combinatorial commutative algebra. For further discussions of edge polytopes, see \cite{HLZ}, \cite{MHNOH}, \cite{OH1}, and~\cite{OH2}.
 
In this paper we demonstrate that edge polytopes form a rich family of $0/1$-polytopes with
interesting random and extremal properties.
In particular, we obtain 
edge polytopes with an exponential number of facets
 (see Theorem~\ref{thm:exponential_facets}) and $k$-neighborly $0/1$-polytopes with more than linearly
 many vertices for any $k \ge 2$ (Corollary~\ref{cor:turan_polytope}).
 On the other hand we will show that edge polytopes can be described and analyzed in terms of 
 parameters of the graphs they are based on (and thus are not as intractable as the same problems for 
 general $0/1$-polytopes \cite{KGSZ} \cite{Z67}). Thus  
we obtain structural overview concerning three main topics:
 \begin{compactenum}[(1)]
 \item a description of the low-dimensional faces of the polytope $\PP(G)$; 
 \item non-linear relations between the components of the $f$-vector of $\PP(G)$;
 \item the asymptotics of the maximal number of facets of $d$-dimensional edge polytopes for large~$d$.
 \end{compactenum}
Here are some remarks connected to this.
 To describe all low-dimensional faces of $\PP(G)$ we only need to consider ``small'' induced subgraphs of $G$.   
 The second topic  
 is closely related to  
 the problem of finding minimal density of a fixed bipartite graph in a dense graph.
 Concerning the third topic Gatzouras et al.~\cite{GGM}, improving on a breakthrough by B\'{a}r\'{a}ny \& P\'{o}r \cite{BP}, showed that there are random $0/1$-polytopes in $\RR^d$ with as many as $\big ( \frac{cd}{\log^2{d}} \big )^{d/2}$ facets (or more), where $c>0$ is an absolute constant. 
 The situation for $d$-dimensional random edge polytopes, however, where we have only a polynomial (quadratic) number of potential vertices, turns out to be quite different from that of general $0/1$-polytopes.

 The paper is divided into five sections.
 In the next section we introduce the object of our investigation and determine the dimension of an arbitrary edge polytope.  
 A criterion for determining faces of edge polytopes is provided.

 In Section~\ref{sec:2} we compute the number of edges of $\PP(G)$ in terms of the number of vertices of $\PP(G)$, the number of $4$-cycles and the number of $4$-cliques in~$G$.
 The function $g(n):=\maximum{\{f_1(\PP(G)):  G \text{ has }n\text{ vertices} \}}$ is 
 in Theorem~\ref{Bound the number of edges}
 shown to be of order $\Theta(n^4)$. The lower bound for this is provided by random edge polytopes. 
 
 In Section~\ref{sec:3} we characterize $k$-neighborly edge polytopes for $k \geq 2$.
 We then obtain a tight upper bound on the number of vertices of these edge polytopes,
 by counting various types of walks in the graph.
 All edge polytopes which attain these bounds are pseudo-random in some sense.  
 
 In Section~\ref{sec:4} we use results of Ohsugi \& Hibi \cite{OH} to show that a $d$-dimensional edge polytope has at most $2^d + d$ facets.
 Inspired by Moon \& Moser \cite{MM}, we provide a construction for $d$-dimensional edge polytopes with 
 roughly $4^{d/3}$ facets.
 
 \section{Preliminaries}
 All graphs in this paper are finite, undirected, with no loops, no multiple edges and no isolated vertices. The vertex and edge sets of a graph $G$ are denoted by $V(G)$ and $E(G)$.  We write  
 $|G|$ for the number of vertices of $G$, and  
 $e(G)$ for the number of edges.
 We write $G[S]$ for the subgraph of $G$ induced by a set $S \subseteq V(G)$. Given two sets $S, T\subseteq V(G)$, not necessarily disjoint, we write $e_G(S,T)$ for the number of ordered pairs $(s,t)$ with $s \in S, t \in T$ and $st \in E(G)$. Given a non-empty subset $X \subseteq V(G)$, the {\em neighbor} set of $X$ in $G$ is $N_G(X):=\{v \in V(G)\setminus X: \ \textrm{$v$ is adjacent to some vertex in $X$}\}$. If the graph $G$ is clear from the context, we often write $N(X)$ instead of $N_G(X)$.
 
 The main object of our consideration is a special class of $0/1$-polytopes.
 
 \begin{definition}
 Let $G$ be a graph on the vertex set $\{1,2,\ldots,n\}:=[n]$.  
 The \emph{edge polytope} $\PP(G)$ of $G$ is the convex hull of all vectors $\e_i + \e_j$ such that $i$ is adjacent to $j$, where $\e_i$ and $\e_j$ denote the $i$th and $j$th unit vectors of $\RR^n$. 
 \end{definition}
 
 Thus the edge polytopes of $n$-vertex graphs correspond to the subpolytopes of the second hypersimplex of order $n$.
 
 \begin{example} 
 The \emph{second hypersimplex} of order $n$ is defined as
 \[
 \Delta_{n-1}(2):=\conv \{\e_i+\e_j: 1\le i < j \le n\}\subseteq \RR^{n}.
 \]
 It is the edge polytope of the complete graph $K_n$. 
 The second hypersimplex $\Delta_{n-1}(2)$ has dimension $n-1$
 (it is contained in $\{\x\in\RR^n:x_1+\dots+x_n=2\}$), $\binom n2$~vertices, and $2n$ facets if $n \ge 4$. 
 For example, $\Delta_3(2) \subseteq \RR^4$ is affinely equivalent to the regular octahedron.%
 \end{example}
 
 \begin{example}
 Let $C_n$ be the cycle $(1,2,\ldots,n{-}1,n)$. 
 If $n$ is odd, then $\e_1+\e_2,\e_2+\e_3$, $\ldots,\e_{n-1}+\e_n,\e_n+\e_1$ are affinely independent,
 so the edge polytope $\PP(C_n)$ is an $(n-1)$-simplex.  
 If $n$ is even, then the edge polytope $\PP(C_n)$ has dimension $n-2$:
 It is a sum of two $(\frac n2-1)$-simplices in $\{\x\in\RR^n:x_1+\dots+x_n=2,\sum_{i=1}^{n}(-1)^ix_i=0\}$.
 \end{example}
 
The definition of edge polytope can be rephrased in the following way.
Let $G$ be a graph. The \emph{incidence matrix} of $G$ is the matrix $A \in \{0,1\}^{V(G)\times E(G)}$ with
 \begin{equation*} a_{v,e}=
 \begin{cases}
 1 & \text{ if $v \in e$}, \\
 0 & \text{ otherwise.} 
 \end{cases}
 \end{equation*}
 The edge polytope $\PP(G)$ of $G$ is precisely the convex hull of the column vectors of the matrix $A$. 
 Hence $\PP(G)$ can be obtained by taking the intersection between the cone $\cone{(A)}$ and the hyperplane $\{\x \in \RR^n: x_1+\dots+x_n=2 \}$.  Thus
 \[
 \dim{(\PP(G))}= \dim{(\cone{(A)})}-1=\rank{(A)}-1.
 \]
 On the other hand, by \cite[Theorem 8.2.1]{GR}, $\rank{(A)}=n-c_0(G)$,
 where $c_0(G)$ is the number of connected bipartite components of $G$. 
 We have therefore determined $\dim{(\PP(G))}$. 
 
 \begin{lemma}[{Valencia \& Villarreal \cite[p.~57]{VV}}]\label{lem:dimension} 
 Let $G$ be a graph with $n$ vertices and $c_0(G)$ bipartite components.
 Then the dimension of the edge polytope $\PP(G)$ of $G$ equals $n-c_0(G)-1$. 
 \end{lemma}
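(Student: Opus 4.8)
The plan is to carry out the reduction sketched in the paragraph preceding the statement, and then invoke the standard rank computation for incidence matrices.

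First I would observe that, since every generator $\e_i+\e_j$ of $A$ satisfies $x_1+\dots+x_n=2$, the cone $\cone(A)$ is exactly the cone with apex $0$ over the polytope $\PP(G)$, and that it meets the hyperplane $H:=\{\x\in\RR^n:x_1+\dots+x_n=2\}$ precisely in $\PP(G)$: each ray of $\cone(A)$ crosses the affine hyperplane $H$ in at most one point because the linear functional $\sum_i x_i$ is strictly positive on every generator. As $0\notin H$, the affine hull $\aff(\PP(G))$ is a hyperplane inside the linear span of the columns of $A$, so
\[
\dim\PP(G)=\dim\aff(\PP(G))=\dim\big(\operatorname{span}(\text{columns of }A)\big)-1=\rank(A)-1.
\]

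Second, I would establish $\rank(A)=n-c_0(G)$. One may quote \cite[Theorem 8.2.1]{GR} directly, as the text does; alternatively, the incidence matrix is block diagonal along the connected components of $G$, so it suffices to treat a connected graph. If the component is bipartite with colour classes $X,Y$, then the left kernel of its incidence matrix is spanned by $\mathbf{1}_X-\mathbf{1}_Y$, hence its rank is one less than its number of vertices; if the component is non-bipartite, any vector in the left kernel must alternate in sign along edges and therefore vanish along an odd closed walk, so it is $0$ by connectivity and the rank equals the number of vertices. Summing over components contributes $n$ from the vertex counts and $-1$ for each bipartite component, giving $\rank(A)=n-c_0(G)$.

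Combining the two gives $\dim\PP(G)=n-c_0(G)-1$. There is no real obstacle here: the computation is elementary once the two ingredients are in place. The only point that deserves a moment's attention is the ``$-1$'' — being sure that $H$ genuinely drops the dimension — which comes down precisely to the facts that $0\notin H$ and that $\sum_i x_i$ does not vanish identically on the span of the columns of $A$.
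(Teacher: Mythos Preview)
Your proposal is correct and follows exactly the approach the paper uses in the paragraph preceding the lemma: reduce $\dim\PP(G)$ to $\rank(A)-1$ via the hyperplane slice of $\cone(A)$, then invoke the rank formula for the incidence matrix. You simply supply more detail---justifying the ``$-1$'' carefully and optionally proving $\rank(A)=n-c_0(G)$ by the left-kernel analysis rather than only citing \cite[Theorem 8.2.1]{GR}---but the route is the same.
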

 
 This result enables us to obtain a quadratic upper
  bound on the number of vertices of the polytope $\PP(G)$ in terms of its dimension.
 
 \begin{proposition} \label{prop:number_of_edges_and_dimension}
 Let $G$ be a finite graph, and let $d=\dim{(\PP(G))}+1$.  
 If $d \ge 4$, then $e(G) \le \binom d2$. 
 Equality holds if and only if $G$ is a complete graph with at least~$4$ vertices.
 \end{proposition}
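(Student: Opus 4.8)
The plan is to reduce the statement to the connected case via the decomposition of $G$ into connected components, so that the only substantial inequality left is one about bipartite components. By Lemma~\ref{lem:dimension} we have $d=|G|-c_0(G)$, so the inequality to prove is $e(G)\le\binom{|G|-c_0(G)}2$. Write $G$ as the disjoint union of its connected components $G_1,\dots,G_c$, put $n_i:=|G_i|$, and set
\[
d_i:=\begin{cases}n_i&\text{if $G_i$ is not bipartite},\\[2pt] n_i-1&\text{if $G_i$ is bipartite,}\end{cases}
\]
so that $d=\sum_i d_i$ and $e(G)=\sum_i e(G_i)$. The binomial identity gives $\binom d2=\sum_i\binom{d_i}2+\sum_{i<j}d_id_j$, and since $G$ has no isolated vertex each $d_i\ge1$, whence $\sum_{i<j}d_id_j\ge\binom c2$. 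Thus it suffices to produce a per-component bound $e(G_i)\le\binom{d_i}2+\varepsilon_i$ and then check $\sum_i\varepsilon_i\le\sum_{i<j}d_id_j$.

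For the per-component bound: if $G_i$ is not bipartite, then $e(G_i)\le\binom{n_i}2=\binom{d_i}2$, with equality iff $G_i$ is complete (and then $n_i\ge3$), so we take $\varepsilon_i=0$. If $G_i$ is bipartite with sides of sizes $a\le b$, then $e(G_i)\le ab\le\lfloor n_i^2/4\rfloor$, and an elementary estimate shows $\lfloor n_i^2/4\rfloor\le\binom{n_i-1}2=\binom{d_i}2$ for every $n_i\ge5$ — with equality in that range only at $n_i=5$, attained exactly by $K_{2,3}$ — while for $n_i\le4$ one merely has $e(G_i)\le\binom{d_i}2+1$. Hence we may take $\varepsilon_i=1$ precisely for the \emph{bad} components, those bipartite on at most $4$ vertices, and $\varepsilon_i=0$ otherwise; note that a bad component has $d_i\le3$.

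Write $B$ for the set of bad components, so $\sum_i\varepsilon_i=|B|$; it remains to verify $|B|\le\sum_{i<j}d_id_j$ under $d\ge4$. If $c=1$ the right-hand side is $0$, and $|B|=1$ would make $G$ bipartite on at most $4$ vertices, i.e.\ $d=d_1\le3$, a contradiction, so $|B|=0$. If $c\ge3$ then $\sum_{i<j}d_id_j\ge\binom c2\ge c\ge|B|$. If $c=2$ the inequality $|B|\le d_1d_2$ can fail only when $|B|=2$ and $d_1d_2=1$, forcing $G_1=G_2=K_2$ and hence $d=2$, again impossible. This proves $e(G)\le\binom d2$.

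For the equality characterization I would trace equality back through these steps: it forces $\sum_i\varepsilon_i=\sum_{i<j}d_id_j$, and the three-case analysis above shows this is compatible with $d\ge4$ only for $c=1$, so $G$ is connected; then $e(G)=\binom{d_1}2$, and the per-component discussion forces $G$ to be complete when $G$ is not bipartite (necessarily on $d=|G|\ge4$ vertices), the bipartite subcase being pinned down to $K_{2,3}$, which arises only at $d=4$. I expect the one genuinely nontrivial ingredient to be the bipartite per-component bound $\lfloor n_i^2/4\rfloor\le\binom{n_i-1}2$ for $n_i\ge5$ together with its sharpness analysis; everything else is bookkeeping that the hypothesis $d\ge4$ makes routine.
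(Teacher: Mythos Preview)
The paper states this proposition without proof, so there is no argument in the paper to compare yours against directly. Your proof of the inequality $e(G)\le\binom d2$ is correct: the decomposition $d=\sum_i d_i$ over connected components, the per-component bounds (trivial for non-bipartite components; $\lfloor n_i^2/4\rfloor\le\binom{n_i-1}2$ for bipartite components with $n_i\ge5$; and $e(G_i)\le\binom{d_i}2+1$ for the smaller bipartite ones), and the absorption of the $|B|$ correction terms into the cross-term $\sum_{i<j}d_id_j$ under the hypothesis $d\ge4$ all go through as written.

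Your equality analysis, however, actually \emph{refutes} the ``only if'' direction of the proposition as stated. You correctly trace equality down to $c=1$ and then to either $G=K_n$ with $n\ge4$ or $G=K_{2,3}$. The latter case is genuine: $K_{2,3}$ has $n=5$ and is connected bipartite, so $d=n-1=4\ge4$, while $e(K_{2,3})=6=\binom42$. Since $K_{2,3}$ is not a complete graph, the proposition's equality characterization is incomplete --- it should read ``$G$ is a complete graph on at least $4$ vertices, or $G\cong K_{2,3}$.'' You identified this case in your final paragraph but stopped short of saying so; you should state explicitly that the proposition as printed needs this amendment.
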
 

 For further investigation, we need a criterion for determining faces of an edge polytope. 
 From now on, the symbol $\e_{ij}$ is used to denote the vector $\e_i+\e_j$.
 
 We will use the following simple criterion.  
 
 \begin{lemma} \label{lem:face_criterion}
 Let $V \subset \RR^n$ be the vertex set of a polytope $P$ and let $U \subseteq V$.  
 Then $U$ is the vertex set of a face of $P$ if and only if 
 $\aff{(U)} \cap \conv{(V {\setminus} U)} = \emptyset$. 
 \end{lemma}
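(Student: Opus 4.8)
This is a standard polytope fact, and I would prove both implications directly from the notions of supporting hyperplane and from the separation theorem; the (improper) cases $U=\emptyset$ and $U=V$ are trivial (the empty face and $P$ itself, using the conventions $\aff(\emptyset)=\conv(\emptyset)=\emptyset$), so I assume $\emptyset\neq U\subsetneq V$. For the ``only if'' direction, suppose $U$ is the vertex set of a proper face $F$ of $P$, and choose a supporting hyperplane $H=\{\x\in\RR^n:\langle\mathbf c,\x\rangle=\delta\}$ with $F=P\cap H$ and $P\subseteq\{\x:\langle\mathbf c,\x\rangle\le\delta\}$. Since the vertices of $P$ lying in a face $F$ are exactly the vertices of $F$, the set $U$ equals $\{v\in V:\langle\mathbf c,v\rangle=\delta\}$, while every $v\in V\setminus U$ lies in $P$ but not in $H$, hence satisfies $\langle\mathbf c,v\rangle<\delta$. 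Thus $\aff(U)\subseteq H$, whereas $\conv(V\setminus U)$ is contained in the open halfspace $\{\x:\langle\mathbf c,\x\rangle<\delta\}$ (a convex combination of points with $\langle\mathbf c,\cdot\rangle<\delta$ again has value $<\delta$). Being on opposite sides of $H$, these two sets are disjoint.

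For the ``if'' direction, suppose $\aff(U)\cap\conv(V\setminus U)=\emptyset$. Here $\aff(U)$ is a nonempty closed convex set, $\conv(V\setminus U)$ is a nonempty \emph{compact} convex set, and they are disjoint, so the separation theorem provides $\mathbf c\in\RR^n$ with $\sup\{\langle\mathbf c,\x\rangle:\x\in\conv(V\setminus U)\}<\inf\{\langle\mathbf c,\y\rangle:\y\in\aff(U)\}$. As $\aff(U)$ is an affine subspace, the functional $\langle\mathbf c,\cdot\rangle$ cannot be nonconstant on it (otherwise its infimum would be $-\infty$); hence it is constant on $\aff(U)$, equal to some $\delta'$, and $\langle\mathbf c,v\rangle=\delta'$ for $v\in U$ while $\langle\mathbf c,v\rangle<\delta'$ for $v\in V\setminus U$. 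Consequently $\langle\mathbf c,\x\rangle\le\delta'$ for all $\x\in P=\conv(V)$, so $H=\{\x:\langle\mathbf c,\x\rangle=\delta'\}$ is a supporting hyperplane of $P$. If $\x=\sum_{v\in V}\lambda_v v\in P\cap H$ is a convex combination, then equality $\sum_v\lambda_v\langle\mathbf c,v\rangle=\delta'$ forces $\lambda_v=0$ for every $v\in V\setminus U$, so $P\cap H=\conv(U)$. Finally each $u\in U$ is a vertex of $P$, hence an extreme point of the subset $\conv(U)\subseteq P$, so $U$ is exactly the vertex set of the face $P\cap H$.

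The only mildly delicate point is the separation step: one must note that strictly separating the affine subspace $\aff(U)$ from the compact set $\conv(V\setminus U)$ automatically yields a functional that is \emph{constant} on $\aff(U)$ (equivalently, a separating hyperplane containing $\aff(U)$), which is exactly what is needed to turn the hypothesis into a supporting hyperplane of $P$. Everything else is routine bookkeeping with convex combinations and the fact that elements of $U$ remain vertices of the subpolytope $\conv(U)$.
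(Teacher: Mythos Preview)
The paper does not actually prove this lemma; it is stated as a ``simple criterion'' and used without proof. Your argument is correct and is the standard one: a supporting hyperplane witnesses the ``only if'' direction, and for the ``if'' direction the separation theorem (closed affine subspace versus disjoint compact convex set) produces a linear functional that is automatically constant on $\aff(U)$, hence yields a supporting hyperplane cutting out exactly $\conv(U)$. The observation that constancy on $\aff(U)$ is forced because the infimum is finite is exactly the right way to handle the only nontrivial point.
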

  
 For edge polytopes, this criterion can be reformulated as follows.
 
 \begin{lemma} \label{lem:local_property}
 Let $H$ be a subgraph of $G$ with the vertex set $U$. 
 Then $\PP(H)$ is a face of $\PP(G)$ if and only if 
 $\PP(H)$ is a face of $\PP(G[U])$.  
 \end{lemma}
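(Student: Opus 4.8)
The plan is to translate face-ness of $\PP(H)$ into the disjointness condition of Lemma~\ref{lem:face_criterion} for both $\PP(G)$ and $\PP(G[U])$ and then compare the two conditions. First I would record that for any graph $F$ the vertex set of $\PP(F)$ is exactly $\{\e_{ij} : ij \in E(F)\}$: each $\e_{ij}$ is a $0/1$-vector, hence a vertex of the $0/1$-cube, hence a vertex of the subpolytope $\PP(F)$. In particular, writing $W := \{\e_{ij}:ij\in E(H)\}$, the set $W$ is the vertex set of $\PP(H)$, while $\{\e_{ij}:ij\in E(G[U])\}$ and $\{\e_{ij}:ij\in E(G)\}$, both containing $W$, are the vertex sets of $\PP(G[U])$ and $\PP(G)$; so Lemma~\ref{lem:face_criterion} applies to each of the three polytopes.

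The ``only if'' direction is immediate: since $\PP(H)\subseteq\PP(G[U])\subseteq\PP(G)$, a supporting hyperplane of $\PP(G)$ meeting it exactly in $\PP(H)$ restricts to a supporting hyperplane of $\PP(G[U])$ with the same intersection; equivalently, passing from $G$ to $G[U]$ only removes vertices from the ``outside'' set in Lemma~\ref{lem:face_criterion}, so the empty-intersection condition is inherited.

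For the ``if'' direction, assume $\aff(W)\cap\conv\{\e_{ij}:ij\in E(G[U])\setminus E(H)\}=\emptyset$; we must deduce $\aff(W)\cap\conv\{\e_{ij}:ij\in E(G)\setminus E(H)\}=\emptyset$. The key observation is that every point of $\aff(W)$ vanishes in each coordinate indexed by $V(G)\setminus U$, because every vertex $\e_{ij}$ of $\PP(H)$ does and this property is preserved under affine combinations. Now suppose $p=\sum_{ij\in E(G)\setminus E(H)}\mu_{ij}\,\e_{ij}$, a convex combination (so $\mu_{ij}\ge 0$), lies in $\aff(W)$. For a vertex $v\notin U$ the $v$-coordinate of $p$ equals $\sum_{ij\ni v}\mu_{ij}$, a sum of nonnegative numbers which by the observation is $0$; hence $\mu_{ij}=0$ whenever $ij$ has an endpoint outside $U$. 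Thus $p$ is in fact a convex combination of $\{\e_{ij}:ij\in E(G[U])\setminus E(H)\}$, contradicting the hypothesis. So no such $p$ exists, and Lemma~\ref{lem:face_criterion} gives that $\PP(H)$ is a face of $\PP(G)$.

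I do not expect a serious obstacle; the nonnegativity-plus-vanishing-coordinates argument in the last paragraph is the crux, and the only care needed is the bookkeeping that makes Lemma~\ref{lem:face_criterion} applicable (the vertex-set identification above) together with the harmless degenerate case $E(G[U])=E(H)$, where $\PP(H)=\PP(G[U])$ and both sides of the equivalence hold trivially.
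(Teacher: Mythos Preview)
Your proof is correct and rests on the same key observation as the paper: points in $\aff(W)$ vanish in every coordinate outside $U$, whereas edges of $G$ leaving $U$ contribute positively there, so nonnegativity of the convex coefficients forces them to drop out. The paper packages the ``if'' direction slightly differently---it first shows the cleaner intermediate fact that $\PP(G[U])$ is always a face of $\PP(G)$ (via exactly your vanishing-coordinate argument applied with $H=G[U]$) and then appeals to transitivity of the face relation---but the substance is identical.
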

 \begin{proof} The ``only if'' part follows from the fact that $\PP(G[U])$ is a subpolytope of $\PP(G)$ and $\PP(H) \subseteq \PP(G[U])$.
 For the ``if'' part, it is enough to show that $\PP(G[U])$ is a face of $\PP(G)$. 
 Observe that all points $\x \in \aff{\big \{\e_{ij}: \{i,j\} \in E(G[U])\big \}}$ satisfy $\x_i =0$ for all $i \notin U$. 
 Hence we have 
\[\aff{\big \{\e_{ij}: \{i,j\} \in E(G[U])\big \}} \cap \conv{\big \{\e_{ij}: \{i,j\} \in E(G) {\setminus} E(G[U]) \big \}} =\emptyset.\]
 So $\PP(G[U])$ is a face of $\PP(G)$, by Lemma~\ref{lem:face_criterion}.
 \end{proof}
 
 \section{Graphs of edge polytopes}\label{sec:2}
 
 The following simple result of Ohsugi and Hibi identifies the edges of $\PP(G)$. 
 
 \begin{lemma} [Ohsugi \& Hibi \cite{OH}] \label{lem:edge_set}
 The vertices $\e_{ij}$ and $\e_{k\ell}$ of~$\PP(G)$ form an edge if and only if
 \begin{compactenum}[\rm(i)]
 \item  the two edges $\{i,j\}$ and $\{k,\ell\}$ have a common vertex, or 
 \item $\{i,j\} \cap \{k,\ell\}=\emptyset$, $\{i,j\}$ and $\{k,\ell\}$ are not contained in any $4$-cycle of $G$.  
 \end{compactenum}
 \end{lemma}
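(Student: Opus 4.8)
The plan is to combine the abstract face criterion of Lemma~\ref{lem:face_criterion} with the locality principle of Lemma~\ref{lem:local_property}. Set $U=\{i,j\}\cup\{k,\ell\}$, so that $|U|\in\{3,4\}$, and recall that the vertex set of $\PP(G)$ is $\{\e_{ab}:\{a,b\}\in E(G)\}$. By Lemma~\ref{lem:local_property}, $\{\e_{ij},\e_{k\ell}\}$ is the vertex set of an edge of $\PP(G)$ if and only if it is the vertex set of a face of $\PP(G[U])$; since $|U|\le 4$, the entire statement is thereby reduced to a check on graphs with at most four vertices. I would also note at the outset that a $4$-cycle of $G$ containing both $\{i,j\}$ and $\{k,\ell\}$ must use precisely the vertices of $U$, hence is a $4$-cycle of $G[U]$, and conversely; so condition~(ii) may be tested inside $G[U]$.

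Next I would dispose of the case in which $\{i,j\}$ and $\{k,\ell\}$ share a vertex, where $|U|=3$ and $G[U]$ is a subgraph of a triangle containing the two given edges, hence is either the path on three vertices or the triangle itself. In the first case $\PP(G[U])$ is the segment $[\e_{ij},\e_{k\ell}]$, a face of itself; in the second it is $\PP(C_3)$, a $2$-simplex, so every pair of its vertices spans an edge. Thus alternative~(i) always yields an edge, and it remains to treat the case $\{i,j\}\cap\{k,\ell\}=\emptyset$, i.e.\ $U=\{i,j,k,\ell\}$ with four distinct elements.

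In that case $\e_i,\e_j,\e_k,\e_\ell$ are linearly independent, and the vertices of $\PP(G[U])$ are $\e_{ij}$, $\e_{k\ell}$, together with those among $\e_{ik},\e_{i\ell},\e_{jk},\e_{j\ell}$ whose index set is an edge of $G[U]$. The mechanism behind the characterization is the identity
\[
\e_{ij}+\e_{k\ell}=\e_i+\e_j+\e_k+\e_\ell=\e_{ik}+\e_{j\ell}=\e_{i\ell}+\e_{jk},
\]
which says that whenever $\{i,k\},\{j,\ell\}\in E(G[U])$ (respectively $\{i,\ell\},\{j,k\}\in E(G[U])$) the midpoint of $[\e_{ij},\e_{k\ell}]$ is already a convex combination of two other vertices, so $\{\e_{ij},\e_{k\ell}\}$ is not a face. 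To get the converse and finish, I would apply Lemma~\ref{lem:face_criterion} directly: writing a point of $\aff\{\e_{ij},\e_{k\ell}\}$ as $t\e_{ij}+(1-t)\e_{k\ell}$ and a point of the convex hull of the remaining vertices as $a\e_{ik}+b\e_{i\ell}+c\e_{jk}+d\e_{j\ell}$ with $a,b,c,d\ge0$ summing to $1$ (and each coefficient forced to $0$ unless the corresponding pair is an edge of $G[U]$), comparison of the $i,j,k,\ell$ coordinates gives $a+b=t=c+d$ and $a+c=1-t=b+d$, which forces $t=\tfrac12$, $a=d$, $b=c$, $a+b=\tfrac12$. Such nonnegative $a,b$ respecting the edge constraints exist if and only if $\{i,k\},\{j,\ell\}\in E(G[U])$ or $\{i,\ell\},\{j,k\}\in E(G[U])$, that is, if and only if $\{i,j\}$ and $\{k,\ell\}$ lie on a common $4$-cycle; so when no such $4$-cycle exists the intersection is empty and $\{\e_{ij},\e_{k\ell}\}$ is an edge. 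This is precisely alternative~(ii).

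The calculation itself is elementary; the one point I would double-check is the bookkeeping of which of $\e_{ik},\e_{i\ell},\e_{jk},\e_{j\ell}$ actually occur as vertices of $\PP(G[U])$, and the verification that in the degenerate sub-cases (where $G[U]$ contains only one, two, or three of these four ``cross'' edges) the coordinate equations really do fail to have an admissible solution unless one of the two relevant $4$-cycles is present.
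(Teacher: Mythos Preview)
The paper does not actually prove this lemma; it is quoted from Ohsugi \& Hibi \cite{OH} and used as a black box. So there is no ``paper's own proof'' to compare against.

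Your argument is correct and, in fact, gives a clean self-contained proof using only the tools already set up in the paper (Lemmas~\ref{lem:face_criterion} and~\ref{lem:local_property}). The reduction via Lemma~\ref{lem:local_property} to $G[U]$ with $|U|\le 4$ is exactly right, the $|U|=3$ case is trivial as you say, and in the $|U|=4$ case your coordinate computation is accurate: the system $a+b=c+d=t$, $a+c=b+d=1-t$, $a+b+c+d=1$ forces $t=\tfrac12$, $a=d$, $b=c$, and the nonnegativity/support constraints then admit a solution precisely when $\{\{i,k\},\{j,\ell\}\}\subseteq E(G[U])$ or $\{\{i,\ell\},\{j,k\}\}\subseteq E(G[U])$, i.e.\ when a $4$-cycle through both edges exists. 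Your closing caveat about the degenerate sub-cases is already handled by the equations $a=d$ and $b=c$: if one edge of a diagonal pair is absent its coefficient is forced to zero, hence so is its partner's, and the whole weight must go to the other diagonal pair. Nothing further needs checking.
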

 
\begin{pspicture}(-0.70,0.00)(12.00,6.60)
\pspolygon(7,6)(5,4)(7,2)(9,4)
\psline(6.60,3.20)(7,6)
\psline(6.60,3.20)(5,4)
\psline(6.60,3.20)(7,2)
\psline(6.60,3.20)(9,4)
\psline[linestyle=dashed](7.40,4.80)(7,6)
\psline[linestyle=dashed](7.40,4.80)(5,4)
\psline[linestyle=dashed](7.40,4.80)(7,2)
\psline[linestyle=dashed](7.40,4.80)(9,4)
\psline[linestyle=dotted](7,6)(7,2)
\psline[linestyle=dotted](5,4)(9,4)
\psline[linestyle=dotted](6.60,3.20)(7.40,4.80)
\put(6.80,6.15){12}
\put(4.45,3.90){13}
\put(6.80,1.55){34}
\put(9.10,3.90){24}
\put(6.15,2.85){14}
\put(7.45,4.85){23}
\put(2.30,0.60){Figure 1: The octahedron $\PP(K_4)$, a face of $\PP(K_n)$. }
\end{pspicture}

Using Lemma~\ref{lem:edge_set}, we can compute the number of edges of $\PP(G)$. 
For this, let $c_4(G)$ and~$k_4(G)$ be the numbers of copies of $C_4$ and of $K_4$ in $G$,
respectively.
As usual in polytope theory, we denote by $f_k$ the number of $k$-dimensional faces of a polytope.

\begin{proposition} \label{prop:number_of_edges}
If $\PP(G)$ be the edge polytope of a simple graph $G$, then $f_0(\PP(G))=e(G)$ and
\[
    f_1(\PP(G))=\tbinom{e(G)}2 -2c_4(G)+3k_4(G).
\]
\end{proposition}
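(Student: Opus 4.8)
The plan is to handle the two statements separately. For $f_0(\PP(G))=e(G)$, I would first observe that each generating point $\e_{ij}$ is a $0/1$-vector whose support is exactly $\{i,j\}$, so distinct edges of $G$ give distinct points. Since every $0/1$-point is a vertex of the cube $[0,1]^n$, such a point cannot be written as a convex combination of the other generating points; hence all $e(G)$ points $\e_{ij}$ are vertices of $\PP(G)$, and $f_0(\PP(G))=e(G)$.

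For the edge count I would work directly from Lemma~\ref{lem:edge_set}. Among the $\binom{e(G)}{2}$ pairs of vertices, a pair $\{\e_{ij},\e_{k\ell}\}$ fails to span an edge of $\PP(G)$ exactly when $\{i,j\}\cap\{k,\ell\}=\emptyset$ and $\{i,j\},\{k,\ell\}$ lie on a common $4$-cycle of $G$; I will call these the \emph{bad pairs} and let $B$ denote their number, so that $f_1(\PP(G))=\binom{e(G)}{2}-B$. It then remains to show $B=2c_4(G)-3k_4(G)$.

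I would establish this by double counting the incidences $(\{e,e'\},Z)$ where $Z$ is a $4$-cycle of $G$ and $e,e'$ are vertex-disjoint (hence ``opposite'') edges of $Z$. Each $4$-cycle has exactly two pairs of opposite edges, so the number of incidences is $2c_4(G)$. On the other hand, every incidence produces a bad pair, and the key point is that a given bad pair $\{\{i,j\},\{k,\ell\}\}$ comes from exactly one or exactly two $4$-cycles: any $4$-cycle containing both edges must have vertex set $\{i,j,k,\ell\}$ and alternate between the two edges, so it is one of the at most two cycles $i\,k\,j\,\ell$ and $i\,\ell\,j\,k$, whose presence depends on whether $\{ik,j\ell\}\subseteq E(G)$ or $\{i\ell,jk\}\subseteq E(G)$. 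Thus $2c_4(G)=B+B_2$, where $B_2$ is the number of bad pairs witnessed by two distinct $4$-cycles.

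Finally I would identify $B_2$ with $3k_4(G)$: a bad pair $\{\{i,j\},\{k,\ell\}\}$ is witnessed by two $4$-cycles precisely when all six edges on $\{i,j,k,\ell\}$ are present, i.e.\ $G[\{i,j,k,\ell\}]=K_4$; conversely each $K_4$ in $G$ gives exactly three such bad pairs, one per perfect matching, and distinct $K_4$'s give distinct bad pairs since the vertex set is recovered from the pair. Hence $B_2=3k_4(G)$, so $B=2c_4(G)-3k_4(G)$ and $f_1(\PP(G))=\binom{e(G)}{2}-2c_4(G)+3k_4(G)$. The only delicate step is the claim that a bad pair has at most two witnessing $4$-cycles together with pinning down the multiplicity $3$ in the $K_4$ case; the rest is bookkeeping.
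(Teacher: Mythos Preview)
Your proposal is correct and follows the same overall scheme as the paper: both arguments invoke Lemma~\ref{lem:edge_set} to reduce the edge count to $f_1=\binom{e(G)}{2}-r_4(G)$, where $r_4(G)$ (your $B$) is the number of pairs of vertex-disjoint edges lying on a common $4$-cycle, and then establish the identity $r_4(G)=2c_4(G)-3k_4(G)$.

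The only difference is in how that identity is verified. The paper observes that each of $r_4$, $c_4$, $k_4$ decomposes as a sum over $4$-element vertex subsets, so it suffices to check the identity for all graphs on four vertices, which is left to the reader. You instead give a direct global double count of incidences between bad pairs and $4$-cycles, tracking whether a bad pair has one or two witnessing cycles and identifying the doubly-witnessed pairs with perfect matchings of a $K_4$. Your argument is a shade more explicit and explains \emph{why} the coefficients $2$ and $3$ arise, whereas the paper's localization is terser but leaves the final check as an exercise; substantively the two are equivalent.
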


\begin{proof}
Let $r_4(G)$ denote the number of pairs of disjoint edges of $G$ which are contained in some 
$4$-cycle of~$G$. 
Lemma~\ref{lem:edge_set} shows that $f_1=\binom{f_0}2-r_4(G)$.  
Thus Proposition~\ref{prop:number_of_edges} will be established if $r_4(G)-2c_4(G)+3k_4(G)=0$ holds. Now 
\[
    r_4(G)-2c_4(G)+3k_4(G)=\sum_{U \subseteq [n],\,|U|=4}\bigl( r_4(G[U])-2c_4(G[U])+3k_4(G[U])\bigl),
\]
so we can assume from the beginning that $|G|=4$.  
The rest is  
left to the reader.
\end{proof} 

We next give a sharp lower bound for $f_1(\PP(G))$ in terms of $f_0(\PP(G))$. 

\begin{theorem} \label{number of vertices and edges}
    If $f_0$ and $f_1$ denote the number of vertices resp.\ edges  
    of the edge polytope $\PP(G)$, then  
\[
    f_0^{3/2}-f_0 \le f_1.
\]  
Equality holds if and only if $G$ is a complete bipartite graph with equal size parts.%
\end{theorem}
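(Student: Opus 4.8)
The plan is to count, for every edge $uv$ of $G$, the number $P(uv)$ of edges $xy\ne uv$ of $G$ for which $\e_{uv}$ and $\e_{xy}$ span an edge of $\PP(G)$. Since distinct edges of $G$ give distinct vertices of $\PP(G)$ we have $f_0(\PP(G))=e(G)=:m$, and since each edge of $\PP(G)$ is counted from both of its endpoints, $2f_1(\PP(G))=\sum_{uv\in E(G)}P(uv)$. By Lemma~\ref{lem:edge_set}, an edge of $G$ that shares a vertex with $uv$ always contributes to $P(uv)$, whereas an edge disjoint from $uv$ contributes unless the two edges lie together on a $4$-cycle of $G$. Writing $D(uv)$ for the number of edges of $G$ that are disjoint from $uv$ and lie on a common $4$-cycle with it, I get the identity $P(uv)=m-1-D(uv)$: of the $m-1$ edges other than $uv$, precisely the $D(uv)$ ``conflicting'' ones fail to be neighbours of $\e_{uv}$. (One may instead start from $f_1=\binom{e(G)}{2}-2c_4(G)+3k_4(G)$ in Proposition~\ref{prop:number_of_edges}; this is the same bookkeeping, but the quantity $P(uv)$ makes both the bound and the equality case transparent.)

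The crucial point is the per-edge estimate $P(uv)\ge 2\sqrt m-2$, obtained by bounding $D(uv)$ two ways. Trivially $D(uv)\le m-d_u-d_v+1$, the number of edges disjoint from $uv$, where $d_u=|N(u)|$. More importantly, if $xy$ is disjoint from $uv$ and lies on a $4$-cycle with it, then $uv$ and $xy$ are the two opposite edges of that $4$-cycle, which forces one of $x,y$ into $N(u)\setminus\{v\}$ and the other into $N(v)\setminus\{u\}$; hence $D(uv)\le (d_u-1)(d_v-1)$. Putting $a=d_u-1$, $b=d_v-1$ gives $P(uv)\ge\max\{a+b,\ m-1-ab\}$, and AM--GM closes the gap: if $a+b<2\sqrt m-2$ then $ab\le\tfrac14(a+b)^2<(\sqrt m-1)^2$, so $m-1-ab>2\sqrt m-2$. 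Summing over the $m$ edges, $2f_1=\sum_{uv}P(uv)\ge 2m(\sqrt m-1)$, that is, $f_1\ge m^{3/2}-m=f_0^{3/2}-f_0$.

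The equality discussion is where I expect the real work to be. Equality forces $P(uv)=2\sqrt m-2$ for \emph{every} edge $uv$, so for every edge both bounds on $D(uv)$ and the AM--GM step must be tight; this already gives $d_u=d_v=\sqrt m$ for every edge (so in particular $m$ is a perfect square). Tightness of $D(uv)\le (d_u-1)(d_v-1)$ means that every vertex of $N(u)\setminus\{v\}$ is adjacent to every vertex of $N(v)\setminus\{u\}$; as $G$ has no loops this forces $N(u)\setminus\{v\}$ and $N(v)\setminus\{u\}$ to be disjoint and completely joined. Tightness of $D(uv)\le m-d_u-d_v+1$ then forces every edge of $G$ disjoint from $uv$ to run between these two sets, and also forces $G$ to be connected; tracing the consequences shows $V(G)=\{u,v\}\cup(N(u)\setminus\{v\})\cup(N(v)\setminus\{u\})$ and that $G$ is complete bipartite with parts $\{u\}\cup(N(v)\setminus\{u\})$ and $\{v\}\cup(N(u)\setminus\{v\})$, of equal size since $d_u=d_v$. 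Thus $G=K_{s,s}$ with $s=\sqrt m$; conversely in $K_{s,s}$ one checks $D(uv)=(s-1)^2$ and $P(uv)=2s-2$ for every edge, so equality holds. The single genuinely new idea is the product bound $D(uv)\le(d_u-1)(d_v-1)$ in the second paragraph; the rest is routine once that is in place, apart from carefully unwinding the two tightness conditions in the equality case.
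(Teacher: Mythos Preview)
Your argument is correct and follows a genuinely different route from the paper's. The paper first reduces to connected $G$, then combines two \emph{global} lower bounds on $f_1$: the polytope bound $f_1\ge\tfrac{d}{2}f_0$ (every vertex of a $d$-polytope has degree at least $d$, and here $d\ge n-2$) and the cherry count $f_1\ge\sum_v\binom{\deg v}{2}\ge n\binom{\bar d}{2}$; averaging these and applying AM--GM to $\tfrac{nf_0}{2}$ and $\tfrac{2f_0^2}{n}$ yields the result. You instead work \emph{locally}, bounding the degree $P(uv)$ of each vertex of $\PP(G)$ via the two estimates on $D(uv)$ and applying AM--GM edge by edge. Your approach is more elementary: it never invokes $\dim\PP(G)$ (Lemma~\ref{lem:dimension}) or the vertex-degree bound for polytopes, and it needs no reduction to the connected case (which the paper asserts without justification). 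It also makes the equality discussion fully explicit, whereas the paper leaves it to the reader. The paper's proof is shorter, trading the product bound $D(uv)\le(d_u-1)(d_v-1)$ for a single appeal to polytope geometry. One small remark on your equality paragraph: the cleanest way to see that $A=N(u)\setminus\{v\}$ and $B=N(v)\setminus\{u\}$ are disjoint is to observe that your injection from conflicting edges into $A\times B$ actually lands in $\{(a,b):a\ne b\}$, so $D(uv)=|A|\,|B|$ already forces $|A\cap B|=0$; your ``no loops'' sentence encodes exactly this, but it reads a bit cryptically as written.
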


\begin{proof}
Without restriction we can assume that $G$ is a connected graph on $n$ vertices. 
Let $\bar{d}$ be the average degree of $G$. 
Since $G$ is connected, it has at most one bipartite component. Thus we have $d:=\dim{(\PP(G))} = n-c_0(G)-1 \ge n-2$. 
Now we count the following set $S$ in two ways: $S$ is the set of incidence pairs $(v,e)$ where $v$ is a vertex of $\PP(G)$, and $e$ is an edge of $\PP(G)$.  
Here $2f_1 = |S| \ge df_0$, and hence $f_1 \ge \frac{df_0}{2}$.  
It follows that
\[\textstyle
  f_1 \ge \frac{df_0}{2} \ge \frac{(n-2)f_0}{2} = \frac{nf_0}{2}-f_0.
\] 
Next we let $T$ be the set of pairs $(u,\{v,w\})$ where $u$ is adjacent to $v$ and to $w$, with $v \neq w$. 
Since $\{u,v\}$ and $\{u,w\}$ are two edges of $G$ which have one common node, they form an edge of $\PP(G)$, by 
Lemma~\ref{lem:edge_set}. Therefore we have
\[\textstyle
    f_1 \ge |T| = \sum_{v \in V(G)} \binom{\deg(v)}2 \ge n \binom{\bar{d}}2 =\frac{2f_0^2}{n} -f_0.
\] 
Combining this inequality with the previous one, we get
\[\textstyle
f_1 \ge \frac{1}{2} \big \{ \frac{nf_0}{2} + \frac{2f_0^2}{n} \big \} -f_0 \ge f_0^{3/2}-f_0.
\] 
It is easy to check that the equality holds if and only if $G \cong K_{n/2,n/2}$.
\end{proof}

Let $\mathcal{P}(G)$ be the edge polytope of a graph $G$ and $f_1(\PP(G))$ be its number of edges.
How large can $f_1(\PP(G))$ be if the number of vertices of $G$ is fixed? 
Formally, we want to know the asymptotic behaviour of the number of edges of the polytope
in terms of the number $n$ of vertices of the graph,
\[
g(n):=\maximum{\{f_1(\PP(G)):  |G|=n\}}.
\]
For that we use the following bound on the number of copies of a fixed complete bipartite subgraph of a graph
of given density.
 
\begin{lemma} [{Alon \cite[Corollary 2.1]{Alon}}] \label{bipartite subgraphs} 
For every fixed $\varepsilon > 0$, any two fixed integers $s \ge t \ge 1$, 
and for any graph $G$ with $n$ vertices and at least~$\varepsilon n^2$ edges, the number of subgraphs of $G$
isomorphic to $K_{s,t}$ is at least
\[\textstyle
 (1+o(1))\binom ns \binom nt (2 \varepsilon)^{st}
\]
if $s>t$, and at least 
\[\textstyle
 (\tfrac{1}{2}+o(1)) \binom ns \binom nt (2 \varepsilon)^{st}
\]
for $s=t$, where the $o(1)$ terms tend to $0$ as n tends to infinity.
\end{lemma}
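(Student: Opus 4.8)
The plan is to prove this by a double-counting argument combined with two applications of Jensen's inequality --- the standard route for counting copies of a fixed bipartite graph in a dense host graph --- and then to keep enough control of the lower-order terms that the leading constants $(2\varepsilon)^{st}$, and the factor $\tfrac12$ in the case $s=t$, come out exactly. For a $t$-element set $T\subseteq V(G)$ let $c(T)$ denote the number of vertices of $G$ adjacent to every vertex of $T$, i.e.\ $c(T)=\big|\bigcap_{v\in T}N_G(v)\big|$. The first step is an exact identity for the number $N$ of subgraphs of $G$ isomorphic to $K_{s,t}$. Since $K_{s,t}$ is connected, a copy of it has a well-defined pair of bipartition classes; when $s>t$ the $t$-class $T$ is forced, it is automatically disjoint from $\bigcap_{v\in T}N_G(v)$ because $G$ is loopless, and its $s$-class may be an arbitrary $s$-element subset of $\bigcap_{v\in T}N_G(v)$. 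Hence
\[
  N\;=\;\sum_{T}\binom{c(T)}{s}\qquad\text{if }s>t,
\]
the sum running over all $t$-subsets $T$ of $V(G)$; for $s=t$ this same sum counts each copy exactly twice, so $N$ is half of it.

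Next I would bound $\sum_{T}c(T)$ from below. Counting the pairs $(v,T)$ with $|T|=t$ and $T\subseteq N_G(v)$ in two ways gives $\sum_{T}c(T)=\sum_{v\in V(G)}\binom{\deg(v)}{t}$. As $x\mapsto\binom{x}{t}$ is convex on $[t-1,\infty)$ and the average degree satisfies $\bar d=2e(G)/n\ge2\varepsilon n\to\infty$, Jensen's inequality (applied to the vertices of degree at least $t-1$; the rest contribute $0$) yields $\sum_{v}\binom{\deg(v)}{t}\ge(1+o(1))\,n\binom{2\varepsilon n}{t}=(1+o(1))(2\varepsilon)^t n\binom{n}{t}$, so the average of $c(T)$ over the $\binom{n}{t}$ sets $T$ is at least $(1+o(1))(2\varepsilon)^t n$. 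Substituting this into the identity for $N$ and applying Jensen a second time, now to the convex function $x\mapsto\binom{x}{s}$ on $[s-1,\infty)$, together with $\binom{(2\varepsilon)^t n}{s}=(1+o(1))(2\varepsilon)^{st}n^s/s!$ and $\binom{n}{t}=(1+o(1))n^t/t!$, gives $N\ge(1+o(1))\binom{n}{s}\binom{n}{t}(2\varepsilon)^{st}$ when $s>t$, and the same bound with an extra $\tfrac12$ when $s=t$, which is the assertion.

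The one place any care is needed --- and, I expect, the part that will take the most writing --- is that $x\mapsto\binom{x}{s}$ is convex only on $[s-1,\infty)$, while some $t$-subsets $T$ may have $c(T)<s-1$ (and likewise some vertices may have small degree in the first step). The remedy is routine but the accounting must be done correctly: discard those terms, which costs nothing since they are non-negative; apply Jensen to the $L\le\binom{n}{t}$ surviving terms, with average value $\mu$; and note that the discarded $c(T)$ are each at most $s$, so $L\mu$ is still $(1+o(1))(2\varepsilon)^t n\binom{n}{t}$. Then $\sum_{T}\binom{c(T)}{s}\ge L\binom{\mu}{s}=(1+o(1))\,L\mu^s/s!=(1+o(1))\,(L\mu)^s\big/\big(L^{s-1}s!\big)$, and since $L\le\binom{n}{t}$ and $s\ge1$ this is at least $(1+o(1))\,(L\mu)^s\big/\big(\binom{n}{t}^{s-1}s!\big)$; inserting the bound on $L\mu$ and using $\binom{n}{t}=(1+o(1))n^t/t!$ yields exactly $(1+o(1))(2\varepsilon)^{st}\binom{n}{s}\binom{n}{t}$. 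The same bookkeeping handles the degree sum. Apart from this the computation is entirely standard, so I anticipate no conceptual obstacle --- only the discipline of keeping the error terms under control so that the constants come out right.
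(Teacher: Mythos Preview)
The paper does not supply a proof of this lemma; it is quoted as \cite[Corollary~2.1]{Alon} and used as a black box in the proof of Theorem~\ref{Bound the number of edges}. So there is nothing in the paper to compare against.

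Your argument is correct and is in fact the standard Kővári--Sós--Turán double count followed by two applications of Jensen, which is essentially how Alon proves it. The identity $N=\sum_T\binom{c(T)}{s}$ (halved when $s=t$), the codegree sum $\sum_T c(T)=\sum_v\binom{\deg v}{t}$, and your trick of writing $L\binom{\mu}{s}\ge(1+o(1))(L\mu)^s/(L^{s-1}s!)\ge(1+o(1))(L\mu)^s/\big(\binom nt^{s-1}s!\big)$ to absorb the discarded small terms all go through exactly as you describe. One minor point worth tightening in the write-up: the assertion that the polynomial $x\mapsto\binom{x}{s}$ is convex on $[s-1,\infty)$ is true but not completely obvious for general $s$; if you prefer not to verify it, simply replace the threshold $s-1$ by any fixed $M_s$ beyond which the leading term forces convexity --- the discarded contribution to $\sum_T c(T)$ is still $O(n^t)=o(n^{t+1})$, so nothing changes.
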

 
It is worth noting that the assertions of the above lemma are tight,
as shown by the random graph $G(n,2 \varepsilon)$ on $n$ labeled vertices in which each pair of vertices 
is an edge with probability $2 \varepsilon$. 
 
And now, as promised, we provide bounds for the function $g(n)$.  

\begin{theorem} \label{Bound the number of edges} 
For every integer $n \ge 6$, the function $g(n)=\max{\{f_1(\PP(G)): |G|=n\}}$ satisfies
\[
  \tfrac{1}{54}n^4 \le g(n) \le  ( \tfrac{1}{32} + o(1))  n^4.
\]
\end{theorem}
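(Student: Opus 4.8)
The plan is to handle the two bounds separately; both rest on Proposition~\ref{prop:number_of_edges}, which expresses $f_1(\PP(G))=\binom{e(G)}{2}-2c_4(G)+3k_4(G)$ purely in terms of subgraph counts. For the \emph{lower bound} I would run the probabilistic method on the random graph $G=G(n,p)$ with the constant $p:=1/\sqrt{3}$. There are $\binom n4$ potential copies of $K_4$ and $3\binom n4$ potential copies of $C_4$ on $n$ labelled vertices (each $4$-set carries three $4$-cycles), so by linearity of expectation
\[
\mathbb{E}\big[f_1(\PP(G))\big]=p^2\binom{\binom n2}{2}-6p^4\binom n4+3p^6\binom n4 .
\]
Substituting $p^2=\tfrac13$ and simplifying the resulting polynomial in $n$ gives
\[
\mathbb{E}\big[f_1(\PP(G))\big]=\tfrac13\binom{\binom n2}{2}-\tfrac59\binom n4=\frac{n(n-1)(n-2)(n+6)}{54},
\]
which is $\ge n^4/54$ exactly when $(n-1)(n-2)(n+6)\ge n^3$, i.e. $3n^2-16n+12\ge0$; this holds for all $n\ge5$. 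Hence some $n$-vertex graph realizes $f_1(\PP(G))\ge n^4/54$.

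For the \emph{upper bound} the danger is the ``positive'' term $+3k_4(G)$, since $\binom{e(G)}{2}$ alone can be as large as about $n^4/8$. The key observation is that every copy of $K_4$ contains exactly three copies of $C_4$, and two distinct copies of $K_4$ lie on distinct $4$-vertex sets, so the $3k_4(G)$ four-cycles arising this way are pairwise distinct; therefore $3k_4(G)\le c_4(G)$, and consequently
\[
f_1(\PP(G))=\binom{e(G)}{2}-2c_4(G)+3k_4(G)\le\binom{e(G)}{2}-c_4(G).
\]
Next I would lower-bound $c_4(G)$ in terms of $e:=e(G)$ using Lemma~\ref{bipartite subgraphs} with $s=t=2$ (equivalently, a Cauchy--Schwarz double count of paths of length two): if $e\ge\varepsilon_0 n^2$ for a fixed $\varepsilon_0>0$, then $c_4(G)\ge(2-o(1))e^4/n^4$, the constant $2$ being tight on quasirandom graphs. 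Writing $e=tn^2$ with $t\le\tfrac12$ this yields $f_1(\PP(G))\le\tfrac12 e^2-(2-o(1))e^4/n^4=n^4\big(\tfrac{t^2}{2}-2t^4+o(1)\big)$. With $s=t^2\in[0,\tfrac14]$, the function $\tfrac{s}{2}-2s^2$ is maximized at $s=\tfrac18$ with value $\tfrac1{32}$, so $f_1(\PP(G))\le(\tfrac1{32}+o(1))n^4$; and for sparse graphs, say $e\le n^2/5$, already $\binom e2\le n^4/50<n^4/32$, so nothing is lost in that range.

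The one genuinely clever point is the inequality $3k_4\le c_4$, which is exactly what keeps the $+3k_4$ term from spoiling the bound and forces the extremal density to lie near $t=1/(2\sqrt2)$ rather than at $t=\tfrac12$. I expect the remaining difficulties to be routine: verifying the polynomial identity for $\mathbb{E}[f_1]$ in the lower bound, and ensuring the $o(1)$ in the $C_4$-count is uniform over the relevant density range $t\in[\varepsilon_0,\tfrac12]$ (handled by treating sparse graphs trivially and applying supersaturation on a fixed compact range of $\varepsilon_0$), so that the one-variable optimization over $t$ is legitimate.
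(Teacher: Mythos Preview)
Your proposal is correct and follows essentially the same route as the paper: both bounds rest on Proposition~\ref{prop:number_of_edges}, the upper bound uses the same key inequality $3k_4(G)\le c_4(G)$ together with Alon's supersaturation (Lemma~\ref{bipartite subgraphs}) and the same one-variable optimization over the edge density, and the lower bound uses the random graph $G(n,1/\sqrt3)$. The only cosmetic difference is that the paper computes $\mathbb E[f_1]$ directly from the edge criterion (Lemma~\ref{lem:edge_set}) rather than via the formula $\binom{e}{2}-2c_4+3k_4$, but the two expectations coincide (indeed both have leading term $\tfrac{n^4}{8}p^2(1-p^2)^2$), and your explicit treatment of the sparse regime in the upper bound is a welcome clarification that the paper leaves implicit.
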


\begin{proof}~\\
(i) \emph{Lower bound} \\
For simplicity of notation, we write $\GG$ instead of $G(n,p)$. 
Define
\[
    p_1=\Pr{(\e_{12},\e_{13} \ \textrm{form an edge of} \ \PP(\GG))}
\]
and
\[
   p_2=\Pr{(\e_{12},\e_{34} \ \textrm{form an edge of} \ \PP(\GG))}.
\]
Lemma~\ref{lem:edge_set} shows that $\conv{\{\e_{12},\e_{13}}\}$ is an edge of $\PP(\GG)$ iff $\e_{12}, \e_{13} \in \GG$, and thus $p_1=p^2$.  
To compute $p_2$, note that $\{1,2\}$ and $\{3,4\}$ are not contained in any 4-cycle of $\GG$ if and only if not both $\{1,3\}, \{2,4\}$ and not both $\{1,4\},\{2,3\}$ belong to $\GG$ (see Figure~1).
From this, we get $p_2=p^2(1-p^2)^2$. 
Thus by linearity of expectation
\begin{align*}
Ef_1 &=\textstyle n \binom{n-1}2 p_1 + \tfrac12 \binom n2 \binom{n-2}2  p_2 \\
     &=\textstyle n \binom{n-1}2 p^2 + \tfrac12 \binom n2 \binom{n-2}2  p^2(1-p^2)^2.
\end{align*} 
For $p=1/\sqrt{3}$ we get
\[
Ef_1=n^4/54 + n^3/18 - 8n^2/27 + n/3 \ge n^4/54.
\] 
From this it follows that $g(n) \ge n^4/54$.  
 
\noindent
(ii)  \emph{Upper bound} \\
Let $G$ be an arbitrary graph with $n$ vertices and $\rho \tfrac{n^2}{2}$ edges ($0< \rho < 1$).  
Since the cycle of length $4$ is isomorphic to the complete bipartite graph $K_{2,2}$, Lemma~\ref{bipartite subgraphs} 
shows that $c_4(G) \ge (\tfrac{1}{8}+o(1))\rho^4 n^4$. 
Furthermore, as each clique of size $4$ contains exactly three cycles of length $4$, we have $c_4(G) \ge 3k_4(G)$. 
Therefore, the number of edges of $\PP(G)$ is at most
\begin{align*}\textstyle
    \binom{e(G)}2  -2c_4(G) +3k_4(G) & \le\textstyle \binom{e(G)}2 -c_4(G)\\
        & \le \tfrac{1}{8} \rho^2 n^4 -\tfrac{1}{8}\rho^4 n^4 + o(n^4) \\
        & \le (\tfrac{1}{32}+o(1))n^4.
\end{align*}        
    \vskip-6.75mm%
\end{proof} 
\medskip

\begin{remark*}
The upper bound in Theorem~\ref{Bound the number of edges} is not tight:
  For any pair of graphs $F$ and $G$, let $N(F,G)$ denote the number of labeled copies of $F$ in $G$. 
A sequence $(G_n)$ of graphs ($|G_n| \rightarrow \infty)$ is \emph{quasi-random with density $p$} ($0<p<1$) if, for every graph $F$, 
\begin{align*} \label{quasi-random}
N(F,G_n)=(p^{e(F)}+o(1))|G_n|^{|F|}.
\end{align*}
According to Chung, Graham, and Wilson~\cite{CGW} this happens iff $N(K_2,G_n)=(p+o(1))|G_n|^2$ and $N(C_4,G_n)=(p^4+o(1))|G_n|^4$. 

If the upper bound of Theorem~\ref{Bound the number of edges} is tight, then we can find a sequence $(G_n)$ of graphs such that $e(G_n)=(\tfrac{1}{\sqrt{8}}+o(1))|G_n|^2$,
$c_4(G_n)=(\tfrac{1}{32}+o(1))|G_n|^4$, and $k_4(G_n)=(\tfrac{1}{96}+o(1))|G_n|^4$. 
This forces that $N(K_2,G_n)=(\tfrac{1}{\sqrt{2}}+o(1))|G_n|^2$, $N(C_4,G_n)=(\tfrac{1}{4}+o(1))|G_n|^4$, and $N(K_4,G_n)=(\tfrac{1}{4}+o(1))|G_n|^4$. 
The first two equalities imply that $(G_n)$ is quasi-random with density $\tfrac{1}{\sqrt{2}}$. 
Hence $N(K_4,G_n)=(\tfrac{1}{8}+o(1))|G_n|^4$, a contradiction.

It remains to be explored whether the upper bound can be improved by formalizing the subgraph count via flag algebras as in~\cite{HNPS}. 
\end{remark*}

\section{Neighborly edge polytopes}\label{sec:3}

Here we provide a forbidden subgraph characterization for $k$-neighborly edge polytopes, and then determine the maximal number of vertices of such polytopes.
For this we first prepare some 
notation.

Given a family $\FF$ of graphs, a graph $G$ is \emph{$\FF$-free} if it contains no copy of a graph in $\FF$ as a subgraph.
The \emph{Tur\'{a}n number} $\ex{(n,\FF)}$ is the maximal number of edges in an $\FF$-free graph on $n$ vertices. 
The \emph{Zarankiewicz number} $\z(n,\FF)$ is the maximal number of edges in an $\FF$-free {bipartite} graph on $n$ vertices.

A polytope $P$ is $k$-\emph{neighborly} if every subset of at most $k$ of its vertices defines a face of $P$. Thus every polytope is $1$-neighborly, and a polytope is $2$-neighborly if and only if its graph is complete. Except for simplices, no $d$-dimensional polytope is more than $\lfloor \frac{d}{2}\rfloor$-neighborly. 

For $k \ge 2$, let $\FF_k$ be a family of graphs on at most $2k$ vertices consisting of
\begin{compactitem}[$\bullet$]
        \item even cycles,
        \item graphs obtained by joining two odd cycles by a path.
\end{compactitem}
For example, $\FF_2=\{C_4\}$ and $\FF_3=\{C_4,F_2,2K_3+e,C_6\}$ (see Figure 2).
 
\begin{pspicture}(0.00,-1.40)(13,3)
\pspolygon(1.80,0.80)(3.20,0.80)(3.20,2.20)(1.80,2.20)
\put(1.80,0.80){\circle*{0.15}}
\put(3.20,0.80){\circle*{0.15}}
\put(3.20,2.20){\circle*{0.15}}
\put(1.80,2.20){\circle*{0.15}}
\put(2.40,0.10){$C_4$}
\pspolygon(4.60,0.80)(5.50,1.50)(6.40,0.80)(6.40,2.20)(5.50,1.50)(4.60,2.20)
\put(4.60,0.80){\circle*{0.15}}
\put(5.50,1.50){\circle*{0.15}}
\put(6.40,0.80){\circle*{0.15}}
\put(6.40,2.20){\circle*{0.15}}
\put(4.60,2.20){\circle*{0.15}}
\put(5.30,0.10){$F_2$}
\pspolygon(7.80,0.80)(8.60,1.50)(9.20,1.50)(10.00,0.80)(10.00,2.20)(9.20,1.50)(8.60,1.50)(7.80,2.20)
\put(7.80,0.80){\circle*{0.15}}
\put(8.60,1.50){\circle*{0.15}}
\put(9.20,1.50){\circle*{0.15}}
\put(10.00,0.80){\circle*{0.15}}
\put(10.00,2.20){\circle*{0.15}}
\put(7.80,2.20){\circle*{0.15}}
\put(8.20,0.10){$2K_3+e$}
\pspolygon(11.40,1.50)(11.80,0.80)(12.60,0.80)(13.00,1.50)(12.60,2.20)(11.80,2.20)
\put(11.40,1.50){\circle*{0.15}}
\put(11.80,0.80){\circle*{0.15}}
\put(12.60,0.80){\circle*{0.15}}
\put(13.00,1.50){\circle*{0.15}}
\put(12.60,2.20){\circle*{0.15}}
\put(11.80,2.20){\circle*{0.15}}
\put(12.05,0.10){$C_6$}
\put(5.20,-1){Figure 2: the family $\FF_3$.}
\end{pspicture}  
 
Now we can characterize $k$-neighborly edge polytopes for $k \ge 2$.

\begin{theorem} \label{thm:neighborly}  
For $k \geq 2$ the edge polytope $\PP(G)$ of a graph $G$ with at least $k$ edges is $k$-neighborly if and only if $G$ is
$\FF_k$-free.
\end{theorem}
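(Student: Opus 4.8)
The plan is to turn $k$-neighborliness of $\PP(G)$ into a local condition on subgraphs of $G$ with few edges, and then into a linear-algebra criterion about kernels of incidence matrices. Since the vertices of $\PP(G)$ are exactly the columns $\e_e:=\e_i+\e_j$ of the incidence matrix $A_G$ (all $0/1$ points with two $1$'s are vertices of the second hypersimplex, hence of $\PP(G)$), the polytope $\PP(G)$ is $k$-neighborly iff $\PP(H)$ is a face of $\PP(G)$ for every subgraph $H\subseteq G$ with $e(H)\le k$ (a set of $j\le k$ vertices of $\PP(G)$ is the vertex set of the edge polytope of the subgraph spanned by the corresponding $j$ edges). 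Fix such an $H$, set $U:=V(H)$, and write $A_\Gamma$ for the incidence matrix of a graph $\Gamma$. By Lemma~\ref{lem:local_property} it suffices to decide whether $\PP(H)$ is a face of $\PP(G[U])$, whose vertex set is $\{\e_e:e\in E(G[U])\}$, so Lemma~\ref{lem:face_criterion} applies; collecting terms, a point of $\aff\{\e_e:e\in E(H)\}\cap\conv\{\e_e:e\in E(G[U])\setminus E(H)\}$ is the same thing as a vector $c\in\ker A_{G[U]}$ with $c_e\le 0$ for $e\notin E(H)$ and $\operatorname{supp}(c)\not\subseteq E(H)$, where the normalisation uses $\mathbf 1^{\top}A_{G[U]}=2\cdot\mathbf 1^{\top}$, which forces $\sum_e c_e=0$ for every $c\in\ker A_{G[U]}$. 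This gives the \smallskip\noindent\emph{Criterion.} $\PP(H)$ is a face of $\PP(G)$ if and only if every $c\in\ker A_{G[U]}$ with $c_e\le 0$ for all $e\in E(G[U])\setminus E(H)$ is supported on $E(H)$.\smallskip

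The second ingredient is the structure of $\ker A_\Gamma$, which I would extract from Lemma~\ref{lem:dimension}: for a connected graph $\Gamma'$ that lemma gives $\dim\PP(\Gamma')=|V(\Gamma')|-2$ if $\Gamma'$ is bipartite and $|V(\Gamma')|-1$ otherwise, so (since $\dim\PP(\Gamma')=\rank A_{\Gamma'}-1$) the columns $\e_e$, $e\in E(\Gamma')$, are linearly independent exactly when $\Gamma'$ is a tree or is unicyclic with an odd cycle. Consequently the \emph{circuits} (minimal linearly dependent edge sets) are precisely the even cycles of $\Gamma$ and the subgraphs consisting of two odd cycles joined by a path, the path possibly of length $0$; the only subtlety is that a ``theta'' subgraph — two vertices joined by three internally disjoint paths — always contains an even cycle, by a parity count, so it never produces a new circuit. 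For any circuit $F$ the kernel $\ker A_F$ is a line; its spanning vector $c$ has full support $E(F)$, attains both signs (as $c\ne 0$ and $\sum c_e=0$), and at each $v\in V(F)$ the relation $\sum_{e\ni v}c_e=0$ has at least two nonzero terms, so $v$ meets an edge with $c_e>0$ and one with $c_e<0$. Since $|E(F)|\le|V(F)|+1$, the smaller of the two sign classes of $E(F)$ has at most $\lfloor(|V(F)|+1)/2\rfloor$ edges.

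Granting these, both directions of the theorem are short. For the ``if'' direction I would argue by contraposition: if $\PP(G)$ is not $k$-neighborly, choose $H\subseteq G$ with $e(H)\le k$ and $\PP(H)$ not a face, put $U=V(H)$, and apply the Criterion to get $c\in\ker A_{G[U]}$ with $c_e\le 0$ off $E(H)$ and $\operatorname{supp}(c)\not\subseteq E(H)$; since $\sum_e c_e=0$ some $c_e>0$, and every positive entry of $c$ lies in $E(H)$, so $c$ has at most $k$ positive entries. Let $F_0$ be a connected component of $\operatorname{supp}(c)$; restricting $c$ yields a nowhere-zero element of $\ker A_{F_0}$, whence $F_0$ has minimum degree $\ge 2$ and each of its vertices meets a positive edge, so the at most $k$ positive edges cover $V(F_0)$ and $|V(F_0)|\le 2k$. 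As $\ker A_{F_0}\ne 0$, $F_0$ is dependent and contains a circuit $F$ — an even cycle or a pair of odd cycles joined by a path — on $|V(F)|\le|V(F_0)|\le 2k$ vertices, i.e.\ $F\in\FF_k$, so $G$ is not $\FF_k$-free. For the ``only if'' direction, again by contraposition: if $F\in\FF_k$ is a subgraph of $G$, then $F$ is a circuit with $|V(F)|\le 2k$; let $c$ span $\ker A_F$, chosen (replacing $c$ by $-c$ if needed) so that $|\{e:c_e>0\}|\le|\{e:c_e<0\}|$, and let $H$ be the subgraph of $F$ with $E(H)=\{e\in E(F):c_e>0\}$. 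Then $e(H)\le\lfloor(|V(F)|+1)/2\rfloor\le k$, and every vertex of $F$ meets a positive edge, so $V(H)=V(F)=:U$; extending $c$ by $0$ on $E(G[U])\setminus E(F)$ keeps it in $\ker A_{G[U]}$, it is $\le 0$ off $E(H)$, and it is negative on the nonempty set $E(F)\setminus E(H)$, hence not supported on $E(H)$. By the Criterion $\PP(H)$ is not a face of $\PP(G)$, so the at most $k$ vertices $\{\e_e:e\in E(H)\}$ fail to span a face and $\PP(G)$ is not $k$-neighborly.

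The main obstacle will be the circuit description of $\ker A_\Gamma$: one has to pin down the support-minimal kernel vectors — in particular the parity argument that a theta subgraph always carries an even cycle, which is what leaves ``two odd cycles joined by a path'' as the only non-cycle circuits — and then track constants carefully, since a circuit on $2k$ vertices can carry as many as $2k+1$ edges; passing to the smaller sign class is precisely what keeps $e(H)\le k$. Once the circuit structure and the Criterion are in place, everything else is a routine application of Lemmas~\ref{lem:dimension}, \ref{lem:local_property} and~\ref{lem:face_criterion}.
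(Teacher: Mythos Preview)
Your proof is correct and complete, but it takes a genuinely different route from the paper's argument.

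The paper works at the level of simplices: it invokes Lemma~\ref{lem:simplex} (from Ohsugi--Hibi) that $\PP(H)$ is a simplex exactly when every cycle of $H$ is odd and each component carries at most one of them, reformulates this for $|H|\le 2k$ as being $\FF_k$-free (Lemma~\ref{lem:simplex_reformulation}), and then uses the standard polytope fact that an $\ell$-neighborly polytope of dimension at most $2\ell-1$ is a simplex. Thus $\PP(G)$ is $k$-neighborly iff $\PP(G[U])$ is a simplex for every $U$ of size $\le 2k$, iff $G$ is $\FF_k$-free. This is short and conceptual, but it imports Lemma~\ref{lem:simplex} as a black box.

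Your approach instead unpacks the linear algebra: you translate ``not a face'' into the existence of a suitably signed kernel vector of the incidence matrix, classify the circuits (minimal dependent edge sets) of $A_\Gamma$ as even cycles and odd-cycle dumbbells directly from the rank formula of Lemma~\ref{lem:dimension}, and then use the sign structure of a circuit vector (each vertex meets both a positive and a negative edge) to control the number of positive edges by $\lfloor(|V(F)|+1)/2\rfloor\le k$. This is longer to set up but entirely self-contained: you recover the content of Lemma~\ref{lem:simplex} from scratch, and the parity argument that a theta subgraph always contains an even cycle is exactly the ``two odd cycles intersecting in more than one vertex force an even cycle'' step hidden in Lemma~\ref{lem:simplex_reformulation}. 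The trade-off is clarity versus brevity: the paper's argument is slicker, yours is more transparent about \emph{which} affine dependence witnesses the failure of neighborliness.
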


Before proving Theorem~\ref{thm:neighborly} we state a consequence.
Let $\mathcal{C}_{2k}^{\rm even}$ denotes the family of all even cycles of lengths at most $2k$. As a bipartite graph is $\FF_k$-free if and only if it is $\mathcal{C}_{2k}^{\rm even}$-free, Theorem~\ref{thm:neighborly} implies the following result. 

\begin{corollary} \label{cor:neighborly_bipartite}
For $k \ge 2$ the edge polytope $\PP(G)$ of a bipartite graph $G$ with at least $k$ edges 
is $k$-neighborly if and only if $G$ is
$\mathcal{C}_{2k}^{\rm even}$-free.
\end{corollary}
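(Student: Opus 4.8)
The plan is to obtain this as an immediate consequence of Theorem~\ref{thm:neighborly}, so that all of the substantive work has already been done. By that theorem, for a graph $G$ with at least $k$ edges the polytope $\PP(G)$ is $k$-neighborly if and only if $G$ is $\FF_k$-free; the hypothesis on the number of edges is inherited verbatim. Thus it suffices to establish the purely combinatorial statement already announced in the text: a \emph{bipartite} graph $G$ is $\FF_k$-free if and only if it is $\mathcal{C}_{2k}^{\rm even}$-free.

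First I would dispose of the easy direction. Every graph in $\mathcal{C}_{2k}^{\rm even}$ is an even cycle of length at most $2k$, hence is one of the graphs listed in $\FF_k$. Consequently an $\FF_k$-free graph is automatically $\mathcal{C}_{2k}^{\rm even}$-free, and this requires no use of bipartiteness.

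For the converse I would argue by contrapositive. Suppose the bipartite graph $G$ contains some $H \in \FF_k$ as a subgraph. By the definition of $\FF_k$, either $H$ is an even cycle of length at most $2k$ --- in which case $H \in \mathcal{C}_{2k}^{\rm even}$ and we are done --- or $H$ is obtained by joining two odd cycles by a path. In the latter case $H$ contains an odd cycle as a subgraph (each of the two odd cycles used in its construction survives), hence so does $G$, contradicting that $G$ is bipartite. So only the first case can occur, and therefore a $\mathcal{C}_{2k}^{\rm even}$-free bipartite graph is $\FF_k$-free. Combining the two directions with Theorem~\ref{thm:neighborly} yields Corollary~\ref{cor:neighborly_bipartite}.

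There is no real obstacle in this argument; the only point that warrants an explicit line is the observation that the ``two odd cycles joined by a path'' members of $\FF_k$ genuinely contain an odd cycle, so that bipartiteness excludes them. Everything else is bookkeeping about which graphs in $\FF_k$ can embed into a bipartite host.
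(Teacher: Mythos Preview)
Your proof is correct and follows exactly the approach the paper uses: it deduces the corollary from Theorem~\ref{thm:neighborly} via the observation that a bipartite graph is $\FF_k$-free if and only if it is $\mathcal{C}_{2k}^{\rm even}$-free. The paper states this equivalence without further comment, while you spell out why the ``two odd cycles joined by a path'' members of $\FF_k$ cannot occur in a bipartite host; this is a harmless elaboration of the same one-line argument.
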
 

For the proof of Theorem~\ref{thm:neighborly}, we use the following lemma, which is a straightforward consequence of
results by Oshugi \& Hibi \cite[Lemmas 1.4 and 1.5]{OH}.
\begin{lemma}
\label{lem:simplex}
Let $H$ be a finite graph. Then $\PP(H)$ is a simplex if and only if every cycle in $H$ is odd, and every connected component of $H$ has at most one odd cycle.
\end{lemma}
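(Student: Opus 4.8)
The plan is to translate ``$\PP(H)$ is a simplex'' into an edge-count identity and then verify that identity one connected component at a time. A $d$-dimensional polytope is a simplex exactly when it has $d+1$ vertices, so combining $f_0(\PP(H))=e(H)$ (as in Proposition~\ref{prop:number_of_edges}: each $\e_{ij}$ is the unique maximizer of the functional $x_i+x_j$ on the vertex set, hence a vertex, and the $\e_{ij}$ are pairwise distinct) with the dimension formula $\dim\PP(H)=|H|-c_0(H)-1$ of Lemma~\ref{lem:dimension}, we see that $\PP(H)$ is a simplex if and only if
\[
  e(H) = |H| - c_0(H).
\]
Thus it remains to prove the purely graph-theoretic statement: $e(H)=|H|-c_0(H)$ holds if and only if every cycle of $H$ is odd and every connected component of $H$ contains at most one odd cycle.

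Both sides of this identity are additive over the connected components $H_1,\dots,H_s$ of $H$: we have $e(H)=\sum_i e(H_i)$, $|H|=\sum_i|H_i|$, and $c_0(H)$ is the number of indices $i$ for which $H_i$ is bipartite. Hence I would analyze a single component $K$ through the quantity $e(K)-|K|$, using the elementary facts that a connected graph $K$ satisfies $e(K)\ge|K|-1$ with equality iff $K$ is a tree, and more generally that $e(K)-|K|+1$ is the dimension of the cycle space of $K$. There are two cases. If $K$ is bipartite, then $e(K)-|K|\ge-1$, with equality iff $K$ is a tree. If $K$ is non-bipartite, it contains an odd cycle, so $e(K)\ge|K|$, i.e.\ $e(K)-|K|\ge 0$, with equality iff $K$ is unicyclic; and a unicyclic graph with an even cycle is bipartite, so in the equality case the unique cycle of $K$ is odd.

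Summing over components, $e(H)-|H|=\sum_i\bigl(e(H_i)-|H_i|\bigr)\ge-c_0(H)$, and equality holds precisely when every bipartite component is a tree and every non-bipartite component is unicyclic with an odd cycle. Since each component is either bipartite (hence a tree in the equality case) or non-bipartite (hence odd-unicyclic in the equality case), this says exactly that every component of $H$ is either acyclic or has a single cycle, which is odd. That is equivalent to: every cycle of $H$ is odd and no component of $H$ has two odd cycles, which is the desired condition.

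I do not foresee a real obstacle here: the substance is the dimension formula (already available as Lemma~\ref{lem:dimension}) together with Euler's relation applied to each component. The only points needing a little care are (a) justifying $f_0(\PP(H))=e(H)$ as an exact equality, so that the simplex property becomes the displayed identity, and (b) the routine check that ``all cycles odd and at most one odd cycle per component'' is the same as ``each component is a tree or an odd unicyclic graph''. Alternatively one could deduce the lemma directly from Ohsugi \& Hibi \cite[Lemmas~1.4 and~1.5]{OH}, but the self-contained route via Lemma~\ref{lem:dimension} is shorter to write out.
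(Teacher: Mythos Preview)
Your argument is correct. The paper itself does not spell out a proof of this lemma; it merely records it as ``a straightforward consequence of results by Ohsugi \& Hibi \cite[Lemmas 1.4 and 1.5]{OH}'', which characterize affine independence of the columns of an incidence matrix. Your route is genuinely different and self-contained within the present paper: you convert ``$\PP(H)$ is a simplex'' into the numerical identity $e(H)=|H|-c_0(H)$ using the vertex count $f_0(\PP(H))=e(H)$ and the dimension formula of Lemma~\ref{lem:dimension}, and then verify that identity componentwise via the cycle-rank (Euler) relation $e(K)-|K|+1\ge 0$ for connected~$K$. The translation back to ``every cycle is odd and at most one odd cycle per component'' is handled cleanly by your case split (bipartite components must be trees, non-bipartite components must be odd-unicyclic).

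What each approach buys: the paper's citation is shorter to state but sends the reader to \cite{OH}; your argument is slightly longer to write but requires nothing beyond Lemma~\ref{lem:dimension} and elementary graph theory, and it makes the equality case of the dimension formula completely explicit. The two small points you flag---(a) that every $\e_{ij}$ is indeed a vertex (your linear-functional argument is fine), and (b) the equivalence between ``tree or odd-unicyclic'' and the stated condition---are the only places needing a sentence of care, and you have identified them correctly.
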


This lemma can be reformulated in terms of forbidden subgraphs as follows.
\begin{lemma}
\label{lem:simplex_reformulation}
Suppose that $k \ge 2$ and $H$ is a graph with at most $2k$ vertices. Then $H$ is $\FF_k$-free if and only if every cycle in $H$ is odd, and every connected component of $H$ has at most one odd cycle.
\end{lemma}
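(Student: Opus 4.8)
The plan is to reduce the statement, via the block decomposition of $H$, to two elementary facts: every theta graph contains an even cycle, and every $2$-connected graph that is neither a single edge nor a cycle contains a theta subgraph.

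First I would remove $k$ from the picture entirely. Since $H$ has at most $2k$ vertices, every subgraph of $H$ has at most $2k$ vertices, so ``$H$ is $\FF_k$-free'' is equivalent to ``$H$ contains no even cycle and no subgraph obtained by joining two odd cycles by a path'' (where the joining path is allowed to have length $0$, i.e.\ the two odd cycles may share a single vertex; this is the reading for which $\FF_3=\{C_4,F_2,2K_3+e,C_6\}$). Thus the lemma becomes: $H$ avoids these two kinds of subgraphs if and only if every cycle of $H$ is odd and every component of $H$ has at most one cycle. One direction is immediate: if every cycle of $H$ is odd there is no even cycle, and if moreover every component has at most one cycle then $H$ cannot contain a graph obtained by joining two odd cycles by a path, since such a graph is connected and contains two distinct cycles.

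For the converse I would work with the block--cut tree of $H$. The crucial step is: \emph{if every cycle of $H$ is odd, then every block of $H$ is a single edge or an odd cycle}. Indeed, a block $B$ on at least three vertices is $2$-connected; were $B$ not a cycle it would contain a theta subgraph, namely two vertices joined by three internally disjoint paths of lengths $a,b,c$, whose three cycles have lengths $a+b$, $b+c$, $c+a$ with even sum $2(a+b+c)$, so one of them is even --- contradicting that all cycles of $H$ are odd. Hence $B$ is an odd cycle. Now suppose $H$ has no even cycle but some component $C$ contains two distinct cycles. Each cycle lies inside a single block, and an odd-cycle block contains exactly one cycle, so $C$ has two distinct blocks $B_1,B_2$ that are odd cycles. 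Traversing the unique $B_1$--$B_2$ path in the block--cut tree of $C$ yields a path $P$ in $C$ meeting $B_1$ in exactly one vertex, meeting $B_2$ in exactly one vertex, and otherwise disjoint from $B_1\cup B_2$ (with $P$ of length $0$ precisely when $B_1$ and $B_2$ share a cut vertex). Then $B_1\cup P\cup B_2$ is a graph obtained by joining two odd cycles by a path, contradicting our hypothesis. Therefore every component of $C$ has at most one cycle, and since all cycles are odd, at most one odd cycle.

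The main obstacle is this converse direction, concentrated in the structural claim that every block is an edge or an odd cycle; the remainder is bookkeeping with the block--cut tree. Two points merit a careful sentence in the write-up: that a $2$-connected graph other than a cycle contains a theta subgraph --- which follows from the ear decomposition, starting from any cycle and adjoining a single ear (a chord counts as an ear of length~$1$) --- and that the block--cut-tree path between $B_1$ and $B_2$ can indeed be realized by a path of $C$ internally disjoint from both odd cycles, which holds because distinct blocks meet in at most one cut vertex and the block--cut tree is traversed without repeating a block.
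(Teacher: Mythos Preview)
Your argument is correct. The core observation you use---that a theta graph (three internally disjoint paths between two vertices) must contain an even cycle because the three cycle lengths $a+b$, $b+c$, $c+a$ have even sum---is exactly the content of the paper's one-line proof, which simply states that ``if a graph contains two odd cycles that intersect in more than one vertex, then it also has an even cycle.'' The paper leaves the remaining case (two odd cycles in the same component meeting in at most one vertex) implicit: a shortest path between them is internally disjoint from both, immediately producing a member of~$\FF_k$.

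Where you differ is in packaging: you route the argument through the block--cut tree, first classifying all blocks as edges or odd cycles and then extracting the joining path from the tree structure. This is sound and makes the ``find a path internally disjoint from both cycles'' step very explicit, but it is more machinery than the statement needs. The paper's direct case split (the two odd cycles share $\ge 2$ vertices, or $\le 1$ vertex) avoids invoking ear decompositions and block trees altogether. Your version would pay off if one wanted a structural description of all $\FF_k$-free graphs on $\le 2k$ vertices (they are exactly the graphs whose blocks are edges or odd cycles, with at most one odd-cycle block per component); for the bare equivalence, the paper's two-sentence proof suffices.
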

\begin{proof}
The ``if'' part is obvious. For the ``only if'' part we observe that if a graph contains two odd cycles that
intersect in more than one vertex, then it also has an even cycle.
\end{proof}

We are now ready to prove Theorem~\ref{thm:neighborly}.
\begin{proof}[Proof of Theorem~\ref{thm:neighborly}]
Assume that $G$ is $\FF_k$-free. Let $\{i_1,j_1\},\ldots,\{i_k,j_k\}$ be $k$ different edges of $G$. Set $U=\{i_1,j_1,\ldots,i_k,j_k\}$, then $|U| \le 2k$. By Lemma~\ref{lem:simplex_reformulation} and Lemma~\ref{lem:simplex}, $\PP(G[U])$ is a simplex. 
Therefore, $\conv\{\e_{i_1j_1},\ldots,\e_{i_kj_k}\}$ is a face of $\PP(G)$ by Lemma~\ref{lem:local_property}. From this it follows that $\PP(G)$ is $k$-neighborly. 

Assume that $\PP(G)$ is $k$-neighborly. Let $U \in \binom{V(G)}{2k}$ be arbitrary. If we set
$\ell=\min\{k,e(G[U])\}$, then the number of non-isolated vertices of $G[U]$ is at most $2\ell$.
Lemma~\ref{lem:dimension} now shows that $\PP(G[U])$ has dimension at most $2\ell-1$. On the other hand, $\PP(G[U])$ is $\ell$-neighborly by Lemma~\ref{lem:local_property}. Hence $\PP(G[U])$ is a simplex. Lemmas~\ref{lem:simplex} and~\ref{lem:simplex_reformulation} imply that $G[U]$ is $\FF_k$-free for each $U \in \binom{V(G)}{2k}$. Thus, $G$ is $\FF_k$-free.
\end{proof}
 
 Theorem~\ref{thm:neighborly} can be used to obtain the following upper bound on the number of vertices of $k$-neighborly edge polytopes.
 \begin{corollary} \label{cor:turan_polytope}
 Let $k \ge 2$ be a fixed integer. Then a $k$-neighborly edge polytope of an $n$-vertex graph has at most 
 $\frac{1}{2}n^{1+1/k}+\left(\frac{k-1}{2k}+o(1)\right)n$ vertices.
 
 Furthermore, for each $k \in \{2,3,5\}$ there are infinitely many positive integers $n$ for which there is an $n$-vertex graph $G_n$ whose edge polytope is $k$-neighborly with at least $\frac{1}{2}n^{1+1/k} + \frac{k-1}{2k}n - n^{1-1/k}$ vertices.
 \end{corollary}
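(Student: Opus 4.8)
The plan is to prove the two halves separately, using two facts already available: $f_0(\PP(G))=e(G)$ (Proposition~\ref{prop:number_of_edges}), and, by Theorem~\ref{thm:neighborly}, that $\PP(G)$ is $k$-neighborly exactly when $G$ is $\FF_k$-free. Thus the upper bound is equivalent to $\ex(n,\FF_k)\le\tfrac12 n^{1+1/k}+(\tfrac{k-1}{2k}+o(1))n$, and the lower bound to exhibiting $\FF_k$-free $n$-vertex graphs $G_n$ with $e(G_n)\ge\tfrac12 n^{1+1/k}+\tfrac{k-1}{2k}n-n^{1-1/k}$ for infinitely many $n$.

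For the upper bound, note that $\FF_k$ contains $C_4,C_6,\dots,C_{2k}$, so every $\FF_k$-free graph has no even cycle of length at most $2k$; the further members of $\FF_k$ (two odd cycles joined by a path, in particular the bowtie and $2K_3+e$) guarantee in addition that every sufficiently small ball contains at most one cycle, necessarily odd, so that $G$ is ``locally almost a tree''. I would then run a Moore-type count: pass to a subgraph whose minimum degree is essentially the average degree $\bar d=2e(G)/n$ (using an irregular Moore bound, so as not to lose a factor $2$), do a breadth-first search from a vertex, and exploit the absence of short even cycles to show that two vertices on one search level have no common neighbour on the next level and that each level induces a matching. Summing the resulting geometric lower bounds on the level sizes up to radius $k$ forces $\bar d\le n^{1/k}+\tfrac{k-1}{k}+o(1)$, hence $e(G)=\tfrac12\bar d\,n\le\tfrac12 n^{1+1/k}+(\tfrac{k-1}{2k}+o(1))n$. (Alternatively one may quote a sharp extremal bound for graphs without short even cycles refining Bondy--Simonovits.) Getting the linear coefficient to be exactly $\tfrac{k-1}{2k}$ is the only delicate point here.

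For the lower bound I would use finite geometry. Thick generalized $m$-gons with equal parameters exist precisely for $m\in\{3,4,6\}$ (Feit--Higman), which is exactly why $k=m-1$ is restricted to $\{2,3,5\}$. Take the projective plane $\mathrm{PG}(2,q)$ when $k=2$ and $q$ is any prime power, the symplectic quadrangle $W(q)$ when $k=3$ and $q$ is an odd power of $2$, and the split Cayley hexagon $H(q)$ when $k=5$ and $q$ is an odd power of $3$; each of these incidence structures has $q+1$ points on every block, $q+1$ blocks through every point, and admits a polarity $\sigma$. Let $\Gamma$ be the polarity graph: vertex set the points, with $p\sim p'$ whenever $p$ lies on the block $p'^{\sigma}$ (this relation is symmetric, and an absolute point $p$, with $p$ on $p^{\sigma}$, gives no loop). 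Then $\Gamma$ has $n=\tfrac{q^{k+1}-1}{q-1}=1+q+\dots+q^k$ vertices, it is $(q+1)$-regular except at the $O(n^{1-1/k})$ absolute points, where the degree is $q$, and --- the essential geometric input --- it has no even cycle of length at most $2k$, because the incidence graph of a generalized $(k+1)$-gon has girth $2(k+1)$. When $k=2$ we already have $\FF_2=\{C_4\}$, so $\Gamma$ is $\FF_2$-free as it stands; when $k\in\{3,5\}$ I would delete one edge from each triangle of $\Gamma$, which leaves a triangle-free graph (so none of the bowtie, $2K_3+e$, or even-cycle members of $\FF_k$ survives) and, since the triangles of $\Gamma$ are controlled by the absolute points, destroys only $O(n^{1-1/k})$ edges. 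Writing $G_n$ for the resulting graph, $f_0(\PP(G_n))=e(G_n)=\tfrac12\big((q+1)n-O(n^{1-1/k})\big)$, and from $n=1+q+\dots+q^k$ one gets $q=n^{1/k}-\tfrac1k+O(n^{-1/k})$, so $e(G_n)=\tfrac12 n^{1+1/k}+\tfrac{k-1}{2k}n+O(n^{1-1/k})$, and a check of the lower-order terms shows this is $\ge\tfrac12 n^{1+1/k}+\tfrac{k-1}{2k}n-n^{1-1/k}$ for all large $q$. Since there are infinitely many admissible $q$ in each of the three cases, this completes the proof.

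The step I expect to be the main obstacle is the finite-geometry bookkeeping in the lower bound for $k=3$ and $k=5$: one has to establish (or extract from the structure of $W(q)$ and $H(q)$) both that the polarity graph carries no even cycle of length $\le 2k$ and that its triangles are so few and so localized near the absolute points that removing one edge per triangle costs, with the constants worked out, at most $n^{1-1/k}$ edges. The second, smaller obstacle is pinning down the exact linear term $\tfrac{k-1}{2k}n$ in the upper bound, which needs the second-order behaviour of the Moore-type estimate rather than only its leading order.
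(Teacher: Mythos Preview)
Your reduction is right: by Theorem~\ref{thm:neighborly} and $f_0(\PP(G))=e(G)$, the Corollary is equivalent to bounding $\ex(n,\FF_k)$ above and below. Your lower bound via polarity graphs of generalized $(k{+}1)$-gons is the same construction the paper uses (it cites Lazebnik--Ustimenko--Woldar, which is exactly this). One small correction: for $k\in\{3,5\}$ no triangle-deletion is needed. An odd $\ell$-cycle in the polarity graph lifts to a $2\ell$-cycle in the incidence graph (of girth $2k+2$), so $\ell\ge k+2$; hence the polarity graph already has girth $k+2\ge5$ and is $(\mathcal C_{2k}^{\rm even}\cup\mathcal C_k)$-free, and in particular $\FF_k$-free, as it stands.

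The genuine gap is in your upper bound. An $\FF_k$-free graph may contain up to $n/3$ pairwise vertex-disjoint short odd cycles (nothing in $\FF_k$ forbids many far-apart triangles), and these spoil a direct Moore count: non-returning walks of length $k$ need not be paths once short odd cycles are present, so the Alon--Hoory--Linial bound does not apply, and your claim that ``each level induces a matching'' does not follow from $\FF_k$-freeness alone. The paper proceeds differently. It first deletes one edge from each short odd cycle to obtain a $(\mathcal C_{2k}^{\rm even}\cup\mathcal C_k)$-free graph $H$, to which Theorem~\ref{thm:turan_number} applies with the correct linear term $\tfrac{k-1}{2k}n$. Naive bookkeeping only yields $e(G)\le e(H)+n/3$, which is too weak; the decisive extra step is \emph{pseudorandomness} (Theorem~\ref{thm:pseudorandomness}): since $H$ is nearly extremal its edges are uniformly spread, and hence so are those of $G$. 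But the vertex set $U$ carrying the short odd cycles spans only $O(|U|)$ edges in $G$ (a disjoint union of short cycles), whereas pseudorandomness predicts roughly $n^{-1+1/k}|U|^2$; this forces $|U|=o(n)$, so only $o(n)$ edges were deleted and the linear coefficient $\tfrac{k-1}{2k}$ survives. This bootstrap via edge distribution is the idea your sketch is missing; without it you will not pin down the second-order term you flagged as ``the only delicate point''.
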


\begin{proof}
In the following we will use  
results on Tur\'{a}n numbers and on pseudorandomness,  
Theorems~\ref{thm:turan_number} and~\ref{thm:pseudorandomness}, which are presented in the appendix.

Since $\PP(G)$ is $k$-neighborly, Theorem~\ref{thm:neighborly} implies that the graph $G$ is 
$\FF_k$-free. Under this condition we will show that, as $n$ goes to infinity,
$e(G) \le \frac{1}{2}n^{1+1/k}+(\tfrac{k-1}{2k}+o(1))n$.
We may assume that
$e(G) \ge \tfrac{1}{2}n^{1+1/k}$.
Now let $\mathcal{T}$ be the set of all odd cycles of length at most~$k$ in $G$, and let $U$ be the set of all vertices in $G$ which is contained in some element of $\mathcal{T}$. 
Because $G$ is $\FF_k$-free, elements in $\mathcal{T}$ are pairwise disjoint, and consequently $|\mathcal{T}| \le n/3$. 
By removing one edge from each element of $\mathcal{T}$ we get a 
$(\mathcal{C}_{2k}^{\rm even} \cup \mathcal{C}_k)$-free graph $H$ with 
$e(H) \ge \frac{1}{2}n^{1+1/k}-\frac{1}{3}n$, where $\mathcal{C}_k$ denotes the family of all cycles of lengths at most~$k$. 
Since $H$ is $(\mathcal{C}_{2k}^{\rm even} \cup \mathcal{C}_k)$-free,
Theorem~\ref{thm:turan_number} tells us that
$e(H) \le \frac{1}{2}n^{1+1/k}+\frac{k-1}{2k}n+n^{1-1/k}$. Therefore, $H$ has average degree $d_H \sim n^{1/k}$. Theorem~\ref{thm:pseudorandomness} can be applied showing that
\[
e_{H}(S,T)=n^{-1+1/k}|S||T|+o(n^{1+1/k}) \quad \textrm{for every $S,T \subseteq V(G)$}.
\]
Since $H$ is obtained from $G$ by deleting $o(n^{1+1/k})$ edges, a similar formula holds for~$G$, namely
\[
e_G(S,T)=n^{-1+1/k}|S||T| + o(n^{1+1/k}) \quad \textrm{for every $S,T \subseteq V(G)$. }
\]
As $G$ is $\FF_k$-free, the induced subgraph $G[U]$ is the disjoint union of elements in~$\mathcal{T}$.  Hence $e_G(U,U)=2|U|\ge 6|\mathcal{T}|$.  On the other hand, $e_G(U,U)=n^{-1+1/k}|U|^2+o(n^{1+1/k})$. 
It follows that $|U|=o(n)$, and so $|\mathcal{T}|=o(n)$. 
From  
Theorem~\ref{thm:turan_number} we obtain
\[
e(G) \le \ex(n,\mathcal{C}_{2k}^{\rm even} \cup \mathcal{C}_k)+|\mathcal{T}| \le \tfrac{1}{2}n^{1+1/k}+(\tfrac{k-1}{2k}+o(1))n.
\]
 
 According to Theorem~\ref{thm:turan_number}, for infinitely many positive integers $n$ there is a $(\mathcal{C}_{2k}^{\rm even} \cup \mathcal{C}_k)$-free graph $G_n$ on $n$ vertices such that $e(G_n) \ge \tfrac{1}{2}n^{1+1/k}+ \tfrac{k-1}{2k}n - n^{1-1/k}$.  By Theorem~\ref{thm:neighborly}, the edge polytopes of these graphs have the desired property.
\end{proof}  

We also have the following lower bound on the maximal number of vertices of a\break $k$-neighborly edge polytope.
\begin{corollary}\label{cor:turan_polytope_bipartite}
 Let $k \ge 2$ be a fixed integer. Then the following holds. 
 \begin{compactenum}[\rm (i)]
 \item A $k$-neighborly edge polytope of an $n$-vertex graph has at most $\tfrac{1}{2}n^{1+1/k}+ O(n)$ vertices. Moreover, there are graphs $G_n$ on $n$ vertices such that $\PP(G_n)$ is a $k$-neighborly polytope with $\Omega(n^{1+2/3k})$ vertices. 
 \item A $k$-neighborly edge polytope of an $n$-vertex bipartite graph has at most $(\frac{1}{2})^{1+1/k} n^{1+1/k}$\break $+\, O(n)$ vertices. Moreover, for $k \in \{2,3,5\}$ this bound is tight up to the linear term for infinitely many $n.$  
 \end{compactenum}
\end{corollary}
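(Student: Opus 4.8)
\emph{Overview and part (i).} All four assertions follow by combining Theorem~\ref{thm:neighborly}, resp.\ Corollary~\ref{cor:neighborly_bipartite}, with extremal results on graphs of large girth, using throughout that $f_0(\PP(G))=e(G)$. For part (i) the upper bound is immediate: a $k$-neighborly $\PP(G)$ forces $G$ to be $\FF_k$-free by Theorem~\ref{thm:neighborly}, whence Corollary~\ref{cor:turan_polytope} gives $f_0(\PP(G))=e(G)\le\tfrac12 n^{1+1/k}+(\tfrac{k-1}{2k}+o(1))n=\tfrac12 n^{1+1/k}+O(n)$. For the lower bound I would first record the elementary fact that every graph in $\FF_k$ has at most $2k$ vertices and --- being an even cycle or a pair of odd cycles joined by a path --- contains a cycle of length at most $2k$; hence \emph{every} graph of girth greater than $2k$ is $\FF_k$-free. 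Feeding in a known dense graph $G_n$ of girth $>2k$ --- the graphs of Lazebnik, Ustimenko and Woldar, i.e.\ the construction underlying the lower-bound part of Theorem~\ref{thm:turan_number}, which satisfy $e(G_n)=\Omega(n^{1+2/3k})$ for infinitely many $n$ --- Theorem~\ref{thm:neighborly} shows $\PP(G_n)$ is $k$-neighborly with $f_0(\PP(G_n))=e(G_n)=\Omega(n^{1+2/3k})$ vertices.

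\emph{Part (ii), upper bound.} By Corollary~\ref{cor:neighborly_bipartite} it suffices to show that a bipartite $\mathcal{C}_{2k}^{\rm even}$-free graph $G$ on $n$ vertices --- equivalently, a bipartite graph of girth $>2k$ --- has $e(G)\le(\tfrac12)^{1+1/k}n^{1+1/k}+O(n)$. This is the bipartite form of the (irregular) Moore bound, contained in Theorem~\ref{thm:turan_number}; the gain of a factor $2^{-1/k}$ over the general bound of part (i) comes about as follows. Write $V(G)=A\sqcup B$ with $|A|\le|B|$ and run breadth-first search from a vertex $v$ whose $k$-th layer $L_k$ lies in $A$; since the girth exceeds $2k$, the first $k$ BFS layers span a tree, so the layers contained in $A$ have total size at most $|A|\le n/2$, while $|L_k|$ has order $d_v(\bar d-1)^{k-1}$. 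In the regular case this already yields $\bar d\le(1+o(1))(n/2)^{1/k}$ and hence $e(G)=\tfrac12 n\bar d\le(\tfrac12)^{1+1/k}n^{1+1/k}$, and the irregular case is disposed of by the usual averaging (convexity) argument of Alon--Hoory--Linial.

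\emph{Part (ii), tightness; main obstacle.} For $k\in\{2,3,5\}$ and each prime power $q$ let $G$ be the incidence graph of a generalized $(k+1)$-gon of order $(q,q)$: a projective plane for $k=2$, a generalized quadrangle for $k=3$, a split Cayley hexagon for $k=5$ --- by the Feit--Higman theorem these are exactly the $k$ admitting a thick balanced generalized polygon, which explains the restriction. Then $G$ is bipartite with $n=2N$ vertices, where $N=\tfrac{q^{k+1}-1}{q-1}$, it is $(q+1)$-biregular, and its girth is $2k+2$; thus $G$ is $\mathcal{C}_{2k}^{\rm even}$-free and $\PP(G)$ is $k$-neighborly by Corollary~\ref{cor:neighborly_bipartite}. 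Since $q^k\le N=\sum_{j=0}^{k}q^{j}\le(q+1)^{k}$, one has $q\le N^{1/k}\le q+1$, so $f_0(\PP(G))=e(G)=(q+1)N$ satisfies $(\tfrac12)^{1+1/k}n^{1+1/k}=N^{1+1/k}\le(q+1)N\le N^{1+1/k}+N$, i.e.\ the upper bound is attained up to a term $O(N)=O(n)$. The only genuinely substantial ingredients are external --- the dense high-girth graphs for general $k$ used in (i), and the exact constant $(\tfrac12)^{1+1/k}$ in the bipartite Moore bound of (ii); everything internal to the corollary is bookkeeping with $f_0(\PP(G))=e(G)$, Theorem~\ref{thm:neighborly}, Corollary~\ref{cor:neighborly_bipartite}, and the routine tracking of the $O(n)$ error terms.
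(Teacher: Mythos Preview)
Your approach is essentially the paper's: reduce via Theorem~\ref{thm:neighborly} and Corollary~\ref{cor:neighborly_bipartite} to Tur\'an-type bounds, cite Lam--Verstra\"ete/Hoory for the upper bounds, and use high-girth constructions and generalized polygons for the lower bounds. The part~(ii) computations with incidence graphs of generalized $(k{+}1)$-gons are correct and more explicit than the paper's bare citation.

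There is one concrete slip in part~(i). You invoke ``the graphs of Lazebnik, Ustimenko and Woldar, i.e.\ the construction underlying the lower-bound part of Theorem~\ref{thm:turan_number}'' to obtain $e(G_n)=\Omega(n^{1+2/3k})$ for \emph{every} $k\ge 2$. But the construction used in Theorem~\ref{thm:turan_number} is the polarity graph, which exists only for $k\in\{2,3,5\}$ (and for those $k$ it gives the much stronger $\tfrac12 n^{1+1/k}$); it says nothing for general $k$. The $\Omega(n^{1+2/3k})$ bound for arbitrary $k$ requires a different source: the paper cites the Lubotzky--Phillips--Sarnak Ramanujan graphs~\cite{LPS}, whose girth is asymptotically $\tfrac{4}{3}\log_{p}n$, so that choosing $p\approx n^{2/3k}$ yields $(p{+}1)$-regular graphs of girth exceeding $2k$ with $\tfrac12 n^{1+2/3k}$ edges. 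Once you swap in the correct reference, your argument goes through unchanged.

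A minor remark on your part~(ii) upper-bound sketch: the assertion ``$|L_k|$ has order $d_v(\bar d-1)^{k-1}$'' is not true for a single starting vertex $v$ in an irregular graph; it only holds after averaging over $v$, which is precisely the content of Hoory's refinement of Alon--Hoory--Linial. You acknowledge this in the next sentence, so the sketch is fine as a pointer to~\cite{Hoory}, which is all the paper does anyway.
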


\begin{proof} {\rm (i)} If the edge polytope $\PP(G)$ is $k$-neighborly, then $G$ is $\mathcal{C}_{2k}^{\rm even}$-free by Theorem~\ref{thm:neighborly}. Thus $\PP(G)$ has at most 
$\ex(n,\mathcal{C}_{2k}^{\rm even}) \le \frac{1}{2}n^{1+1/k} + O(n)$ vertices, according to Lam \& Verstra\"ete \cite{LV}.
 On the other hand, Sarnak et al. \cite{LPS} showed that 
 $\ex(n,\{C_3,C_4,\ldots,C_{2k}\})=\Omega\left(n^{1+2/3k}\right)$.
 Thus Theorem~\ref{thm:neighborly} completes the proof.
 
 {\rm (ii)} If the edge polytope $\PP(G)$ of a bipartite graph $G$ is $k$-neighborly, then the graph is
 $\mathcal{C}_{2k}^{\rm even}$-free by Theorem~\ref{thm:neighborly}.
 It follows from \cite{Hoory} that the polytope $\PP(G)$ has at most
 $\z(n,\mathcal{C}_{2k}^{\rm even}) \le \left(\frac{1}{2}\right)^{1+1/k} n^{1+1/k} + O(n)$ vertices. 
 
 For $k \in \{2,3,5\}$ the existence of  
 \emph{generalized polygons} (Erd\H{o}s \& R\'enyi \cite{ER}, Benson \cite{Ben}, and Singleton \cite{Sing}) shows that
 $\z(n,\mathcal{C}_{2k}^{\rm even}) \ge \left(\frac{1}{2}\right)^{1+1/k} n^{1+1/k} + O(n)$
 for infinitely many $n$. Thus Corollary~\ref{cor:neighborly_bipartite} completes the proof.
 \end{proof}

\section{Edge polytopes with many facets}\label{sec:4}

In this section we study the maximal number of facets of a $d$-dimensional edge polytope.
  Here we only deal with edge polytopes of {connected graphs}; all results can easily be extended to the general case. 
  
  We use some terminology of Ohsugi \& Hibi \cite{OH}, as follows. 
  Let $G$ be a connected graph on the vertex set $[n]$.  A vertex $i \in [n]$ is \emph{regular} (resp.\ \emph{ordinary}) in $G$ if $G[[n] {\setminus}i]$ has no bipartite components (resp.\ if $G[[n] {\setminus} \{i\}]$ is connected). \\
  A subset $\emptyset \neq A \subseteq [n]$ is \emph{independent} in $G$ if $N(\{i\}) \cap A =\emptyset$ for all $i \in A$. 
  If $A$ is independent in $G$, then the bipartite graph induced by $A$ in $G$ is defined to be the graph having the vertex set $A \cup N(A)$ and consisting of all edges $\{i,j\}$ of $G$ with $i \in A$ and $j \in N(A)$.  This graph will be denoted by $G[A,N(A)]$.  \\
  When $G$ is non-bipartite, we say that a subset $\emptyset \neq A \subseteq [n]$ is \emph{fundamental} in~$G$~if%
  \begin{compactenum}[(i)]
    \item  $A$ is independent in $G$ and $G[A,N(A)]$ is connected, and 
    \item $G[[n]{\setminus} (A \cup N(A))]$ has no bipartite components. 
\end{compactenum}
  When $G$ is bipartite, a subset $\emptyset \neq A \subseteq [n]$ is \emph{acceptable} in $G$ if
  \begin{compactenum}[(i)]
    \item $A$ is independent in $G$ and $G[A,N(A)]$ is connected, and  
   \item $G[[n] {\setminus} (A \cup N(A))]$ is a connected graph with at least one edge.  
\end{compactenum}
  Let $A$ be an non-empty \emph{independent set} of $G$. 
  Denoted by $\mathcal{H}_{A}^{+}$ the closed half-space
\[
\mathcal{H}_{A}^{+}=\{\x \in \RR^n: \sum\limits_{i \in N(A)}x_i-\sum\limits_{j\in A}x_j \ge 0 \},
\]
and by $\mathcal{H}_{A}$ the hyperplane
\[
\mathcal{H}_{A}=\{\x \in \RR^n: \sum\limits_{i \in N(A)}x_i-\sum\limits_{j\in A}x_j = 0 \}.
\]
  If $i \in [n]$, then we write $\mathcal{H}_i^{+}$ for the closed half-space
\[\mathcal{H}_i^{+}=\{\x \in \RR^n: x_i \ge 0 \},\]
  and $\mathcal{H}_i$ for the hyperplane
\[\mathcal{H}_i=\{\x \in \RR^n: x_i = 0 \}.\]
 
\begin{theorem}  [{Ohsugi \& Hibi \cite[Theorem 1.7]{OH}}] \label{thm:the_facets}~
\begin{compactenum}[\rm(i)]
 \item  Let $G$ be a connected non-bipartite graph on the vertex set $[n]$.
  Let $\Psi$ denote the set of those hyperplanes $\mathcal{H}_i$ such that $i$ is regular in $G$ and of those hyperplanes $\mathcal{H}_{A}$ such that $A$ is fundamental in $G$.  Then the set of facets of the edge polytope $\PP(G)$ is $\{\mathcal{H} \cap \PP(G): \mathcal{H} \in \Psi \}$.  
 \item Let $G$ be a connected bipartite graph on the vertex set $V(G)=[n]$, and let $V(G)=V_1 \cup V_2$ be the partition of $V(G)$. 
  Let $\Psi$ denote the set of those hyperplanes $\mathcal{H}_i$ such that $i$ is ordinary in $G$ and of those hyperplanes $\mathcal{H}_{A}$ such that $A$ is acceptable in $G$ with $A \subset V_1$. 
  Then the set of facets of the edge polytope $\PP(G)$ is $\{\mathcal{H} \cap \PP(G): \mathcal{H} \in \Psi \}$. 
\end{compactenum}
\end{theorem}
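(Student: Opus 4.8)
The plan is to prove that the facets of $\PP(G)$ are exactly the sets $\mathcal H\cap\PP(G)$ with $\mathcal H\in\Psi$, by showing both inclusions. Three facts will be used throughout. (1) The vertices of $\PP(G)$ are \emph{exactly} the points $\e_{ij}$ with $\{i,j\}\in E(G)$: each is a vertex of the hypersimplex $\PP(K_n)\supseteq\PP(G)$ and lies in $\PP(G)$, hence is a vertex of $\PP(G)$; consequently every face $F$ of $\PP(G)$ equals the edge polytope $\PP(G_F)$ of the subgraph $G_F\subseteq G$ carried by the edges $\{i,j\}$ with $\e_{ij}\in F$. (2) Since $\PP(G)\subseteq\{\x:\sum_i x_i=2\}$, a facet-defining vector is determined only up to adding a multiple of the all-ones vector $\mathbf 1$; when $G$ is bipartite with parts $V_1,V_2$ one also has $\PP(G)\subseteq\{\sum_{i\in V_1}x_i=\sum_{j\in V_2}x_j=1\}$, so there is the extra freedom of adding multiples of $\e_{V_1}=\sum_{i\in V_1}\e_i$ and $\e_{V_2}$. (3) The dimension formula of Lemma~\ref{lem:dimension}. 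I describe the non-bipartite case, Theorem~\ref{thm:the_facets}(i); part~(ii) is parallel, the one new feature being that the freedom in (2) is spent once to place $A$ inside $V_1$.

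\emph{Every listed hyperplane cuts out a facet.} The half-space $\mathcal H_A^+$ with $A$ independent is valid on $\PP(G)$: a vertex $\e_{kl}$ meets $A$ in at most one vertex, and if $k\in A$ then $l\in N(A)$, so $\sum_{i\in N(A)}(\e_{kl})_i\ge\sum_{j\in A}(\e_{kl})_j$; the $\mathcal H_i^+$ are trivially valid. A case check of the same computation shows $\e_{kl}\in\mathcal H_A$ precisely when $\{k,l\}$ is an edge of $G[A,N(A)]$ or of $G[[n]\setminus(A\cup N(A))]$; hence $\mathcal H_A\cap\PP(G)=\PP(G')$, where $G'$ is the vertex-disjoint union of those two graphs. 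When $A$ is fundamental, clauses (i)--(ii) guarantee that $G'$ spans $[n]$ with no isolated vertex, has $G[A,N(A)]$ as its unique bipartite component, and no further bipartite component; Lemma~\ref{lem:dimension} then gives $\dim\PP(G')=n-2=\dim\PP(G)-1$, a facet. Likewise $\mathcal H_i\cap\PP(G)=\PP(G[[n]\setminus\{i\}])$, which for $i$ regular has no bipartite component and hence dimension $n-2$, again a facet.

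\emph{Every facet is of this form.} Let $F$ be a facet, so $F=\PP(G_F)$ with $\dim\PP(G_F)=n-2$. Let $m$ be the number of non-isolated vertices of $G_F$ and $c$ its number of bipartite connected components; Lemma~\ref{lem:dimension} gives $m-c=n-1$ with $m\le n$, so either $m=n-1,\ c=0$ or $m=n,\ c=1$. In the first case there is a unique vertex $i$ not covered by $G_F$, so $F\subseteq\mathcal H_i\cap\PP(G)=\PP(G[[n]\setminus\{i\}])$; the right-hand side is a proper face of $\PP(G)$ (as $i$ has a neighbour) that contains the facet $F$, hence equals $F$ (a proper face containing a facet equals that facet); then $\dim=n-2$ forces $G[[n]\setminus\{i\}]$ to have no bipartite component, i.e.\ $i$ is regular, and $F=\mathcal H_i\cap\PP(G)$. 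In the second case $G_F$ spans $[n]$, has one bipartite component $B$, and all other components non-bipartite. Pick a supporting hyperplane $\langle\mathbf c,\x\rangle=\gamma$ of $F$; then $\mathbf c_k+\mathbf c_l=\gamma$ along every edge of $G_F$, and propagating this relation inside each component (an odd cycle forces $2\mathbf c_v=\gamma$) shows $\mathbf c$ is the constant $\gamma/2$ on every non-bipartite component and is two-valued on $B$, equal to $\alpha$ on one colour class $\mathcal A$ and to $\gamma-\alpha$ on the other, $\mathcal A'$, with $\alpha\ne\gamma/2$ (else $\mathbf c\in\RR\mathbf 1$, which defines no facet). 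Subtracting $\tfrac\gamma2\mathbf 1$ and rescaling, the facet inequality takes the form $\sum_{i\in\mathcal A'}x_i-\sum_{j\in\mathcal A}x_j\ge0$ with normal in $\{-1,0,1\}^n$ (the names being chosen so that $\mathcal A$ carries the coefficient $-1$). Validity on $\PP(G)$ now forces $\mathcal A$ to be independent in $G$ (an edge inside $\mathcal A$ would make $\e_{uv}$ have value $-2$); testing an edge of $G$ out of $\mathcal A$ shows every neighbour of $\mathcal A$ lies in $\mathcal A'$, while connectedness of $B$ gives $\mathcal A'\subseteq N_G(\mathcal A)$, so $N_G(\mathcal A)=\mathcal A'$; moreover every edge of $G$ between $\mathcal A$ and $\mathcal A'$ gives a vertex of value $0$, hence a vertex of $F$ and so an edge of $G_F$ lying in the component $B$, whence $G[\mathcal A,N(\mathcal A)]=B$ is connected; and $G[[n]\setminus(\mathcal A\cup N(\mathcal A))]$ contains every non-bipartite component of $G_F$, so each of its components has an odd cycle and it has no bipartite component. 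Thus $\mathcal A$ is fundamental, and since $\PP(G)\subseteq\{\sum_i x_i=2\}$ we conclude $F=\mathcal H_{\mathcal A}\cap\PP(G)$.

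\emph{The main obstacle.} The validity checks, the identification of the face polytopes, and the dimension bookkeeping are routine once Lemma~\ref{lem:dimension} is available. The delicate step --- and the one I expect to be the main obstacle --- is the case $m=n,\ c=1$ of the ``facet $\Rightarrow$ listed'' direction: reconstructing the facet normal from the component structure of $G_F$ via the propagation of the single linear relation, normalising its sign, and then extracting \emph{all four} clauses of ``fundamental'' (independence of $A$ in $G$; $N_G(A)$ being exactly the opposite colour class of $B$; connectivity of $G[A,N(A)]$; and absence of bipartite components in the complement) from the validity and tightness of the reconstructed inequality. For part~(ii) the same scheme works with ``$c=1$'' replaced by ``$c=2$'' --- so that $B$ is one of the two bipartite components of $G_F$ and its complement is the other, automatically connected with an edge --- after first using the gauge freedom from $\e_{V_1},\e_{V_2}$ to arrange that $A\subseteq V_1$; checking ``acceptable'' then runs exactly as the check of ``fundamental'' above.
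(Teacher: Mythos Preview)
The paper does not prove Theorem~\ref{thm:the_facets}; it is quoted verbatim from Ohsugi \& Hibi \cite[Theorem~1.7]{OH} and used as a black box in Section~\ref{sec:4}, so there is no proof in this paper to compare your attempt against.

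That said, your argument is correct and follows the natural route. A few small points are worth making explicit. In the forward direction, you should note that when $[n]\setminus(A\cup N(A))=\emptyset$ condition~(ii) of ``fundamental'' is vacuous and $G'=G[A,N(A)]$ already spans~$[n]$, so the dimension count still goes through. In the backward direction, case $m=n,\ c=1$, the claim that $G[[n]\setminus(\mathcal A\cup N(\mathcal A))]$ has no bipartite component needs one extra sentence: this induced subgraph may have more edges than the restriction of $G_F$, but since $m=n$ each of its vertices lies in some non-bipartite component of $G_F$, hence each component of the induced subgraph contains an odd cycle of $G_F$. Finally, for part~(ii) your gauge manoeuvre does work, but it is worth spelling out what it actually does: adding a suitable multiple of $\e_{V_1}-\e_{V_2}$ does not swap the two colour classes of a fixed component~$B_1$; rather it exchanges the roles of the two components $B_1$ and $B_2$ of $G_F$, so that the ``$-1$'' colour class of the \emph{other} component becomes the candidate set, and that one does lie in $V_1$. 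With these clarifications the proof is complete.
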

  
\noindent 
Let $d \in \NN$.  We write $f(d)$ for the maximal number of facets of $\PP(G)$, where $G$ ranges over all connected graph such that $\dim{(\PP(G))}=d$. 
 
  \begin{lemma} \label{upper bound}
  $f(d) \le 2^d+d$ for all $d \ge 3$. 
  \end{lemma}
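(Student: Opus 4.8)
The plan is to read the facets of $\PP(G)$ off the Ohsugi--Hibi description in Theorem~\ref{thm:the_facets} and then simply count the hyperplanes that can occur. In both parts of that theorem the facets of $\PP(G)$ are exactly $\{\mathcal{H}\cap\PP(G):\mathcal{H}\in\Psi\}$ for an explicit set $\Psi$ of hyperplanes, so $f(d)\le|\Psi|$, and $|\Psi|$ is in turn at most the number of the relevant vertices (regular, resp.\ ordinary) plus the number of the relevant subsets of $[n]$ (fundamental, resp.\ acceptable). I would split according to whether $G$ is bipartite and invoke Lemma~\ref{lem:dimension}: for a connected non-bipartite $G$ on $n$ vertices one has $d=n-1$, and for a connected bipartite $G$ one has $d=n-2$.

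In the non-bipartite case there are at most $n=d+1$ regular vertices, so it suffices to show that $G$ has at most $2^{d}-1=2^{n-1}-1$ fundamental sets. The key point is that a fundamental set is a non-empty independent set, and in a non-bipartite graph the complement $[n]\setminus A$ of an independent set $A$ is never independent --- otherwise $A$ and $[n]\setminus A$ would be the colour classes of a proper $2$-colouring of $G$. Hence at most one set out of each of the $2^{n-1}$ complementary pairs $\{X,[n]\setminus X\}$ can be fundamental; moreover the pair $\{\emptyset,[n]\}$ contains no fundamental set at all, since $\emptyset$ is excluded by definition and $[n]$ is not independent because $G$ has an edge. This gives at most $2^{n-1}-1$ fundamental sets, and therefore $f(d)\le(d+1)+(2^{d}-1)=2^{d}+d$.

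In the bipartite case, the bipartition $V(G)=V_{1}\cup V_{2}$ of a connected bipartite graph is unique up to interchanging the two parts, so I may assume $|V_{1}|\le|V_{2}|$; since $n=d+2\ge 5$ this forces $|V_{1}|\le\lfloor n/2\rfloor\le n-2$. There are at most $n=d+2$ ordinary vertices. An acceptable set $A\subseteq V_{1}$ is non-empty, and it cannot equal $V_{1}$: if $A=V_{1}$ then $N(A)=V_{2}$ (there are no isolated vertices), so $G[[n]\setminus(A\cup N(A))]$ is the graph on the empty vertex set and in particular has no edge, contradicting acceptability. Thus the acceptable subsets of $V_1$ form a subfamily of $\{A:\emptyset\neq A\subsetneq V_{1}\}$, which has $2^{|V_{1}|}-2\le 2^{n-2}-2=2^{d}-2$ members, whence $f(d)\le(d+2)+(2^{d}-2)=2^{d}+d$.

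The conceptual core of the argument is the complementary-pair observation in the non-bipartite case; everything else is bookkeeping. The point that needs care is subtracting the additive constants ($1$ in the non-bipartite case, $2$ in the bipartite case) sharply enough to land exactly on $2^{d}+d$ rather than on $2^{d}+d+1$ or $2^{d}+d+2$; in particular, in the bipartite case one genuinely uses $d\ge 3$ (equivalently $n\ge 5$) to guarantee that, after choosing $V_{1}$ to be the smaller part, $|V_{1}|\le n-2$.
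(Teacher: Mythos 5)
Your argument is correct and follows essentially the same route as the paper: apply the Ohsugi--Hibi facet description, bound the vertex-type hyperplanes by $n$, and bound the independent (hence fundamental/acceptable) sets via the complementary-pair observation in the non-bipartite case and via subsets of the smaller colour class in the bipartite case. The only cosmetic difference is that in the bipartite case the paper settles for the cruder bound $2^{\lfloor n/2\rfloor}+n<2^d+d$, while you additionally exclude $A=\emptyset$ and $A=V_1$; both versions work.
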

  
  \begin{proof} Let $G$ be a connected graph on $[n]$ with $\dim{(\PP(G))}=d \ge 3$. 
  Denote by $f_{d-1}$ the number of facets of $\PP(G)$.  
  It is sufficient to prove that $f_{d-1} \le 2^d+d$. 
  We distinguish two cases. 
  
  If $G$ is  bipartite, then $d=n-2$. 
  Let $V_1 \cup V_2$ be the partition of $V(G)$.  
  We can assume that $|V_1| \le |V_2|$. 
  Applying Theorem~\ref{thm:the_facets}, we get $f_{d-1} \le 2^{|V_1|}+n \le 2^{\lfloor n/2 \rfloor}+n \le 2^{\lfloor (d+2)/2 \rfloor}+d+2 <2^d+d$. 
  
  If $G$ is   non-bipartite, then $d=n-1$. 
  We denote by $\FF$ the family of independent sets in $G$.  
  Let $A \subseteq [n]$.  Since $G$ is not bipartite, either $A \notin \FF$ or $A^{c} \notin \FF$. 
  It follows that $|\FF| \le 2^{n-1}-1$. 
  By Theorem~\ref{thm:the_facets}, we see that $f_{d-1} \le |\FF|+n \le (2^{n-1}-1)+n=2^d+d$. 
  \end{proof}
  
  \begin{lemma} \label{lower bound}
  $f(d) > 4^{\lfloor d/3 \rfloor}$ for all $d>0$. 
  \end{lemma}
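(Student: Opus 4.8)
The plan is to produce, for every $d\ge 3$, a connected non-bipartite graph $G$ with $\dim\PP(G)=d$ whose edge polytope has strictly more than $4^{\lfloor d/3\rfloor}$ facets, and to count these facets via Theorem~\ref{thm:the_facets}(i): for a connected non-bipartite graph the facets of $\PP(G)$ are cut out exactly by the hyperplanes $\mathcal H_i$ with $i$ regular and the hyperplanes $\mathcal H_A$ with $A$ fundamental. (One should note in passing that the hyperplanes so obtained are pairwise distinct, hence correspond bijectively to the facets; this is routine, since the functional $\sum_{i\in N(A)}x_i-\sum_{j\in A}x_j$ is determined by $A$ and $N(A)$ and has coefficients in $\{-1,0,1\}$ with both signs present.) Thus it suffices to build $G$ with at least $4^{\lfloor d/3\rfloor}$ fundamental sets and at least one regular vertex.

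Write $d=3m+r-1$ with $r\in\{1,2,3\}$, so $m=\lfloor d/3\rfloor$, and assume first that $d\ge 3$, so $m\ge 1$. Take $G$ to be the vertex-disjoint union of $m$ triangles $T_1,\dots,T_m$ together with $r$ further \emph{apex} vertices $w_1,\dots,w_r$, pairwise non-adjacent, each joined to every vertex of every $T_i$. Then $G$ is connected, non-bipartite, has $3m+r$ vertices and no bipartite component, so $\dim\PP(G)=3m+r-1=d$ by Lemma~\ref{lem:dimension}. Its non-empty independent sets come in two kinds: non-empty subsets of $\{w_1,\dots,w_r\}$, and \emph{transversals} $A=\{x_i:i\in S\}$ with $\emptyset\ne S\subseteq[m]$ and $x_i\in V(T_i)$.

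The crucial step is to check that every transversal $A$ is fundamental. For such $A$ one has $N(A)=\{w_1,\dots,w_r\}\cup\bigcup_{i\in S}\bigl(V(T_i)\setminus\{x_i\}\bigr)$, hence $A\cup N(A)=\{w_1,\dots,w_r\}\cup\bigcup_{i\in S}V(T_i)$. So $G[[n]\setminus(A\cup N(A))]$ is the disjoint union of the triangles $T_i$ with $i\notin S$ (or the empty graph), which has no bipartite component; and $G[A,N(A)]$ is connected because in it every $x_i$ is joined to $w_1$ and to the other two vertices of $T_i$, while every apex is joined to each $x_i$. On the other hand, if $A$ is a non-empty set of apexes then $G[[n]\setminus(A\cup N(A))]$ consists of the remaining apexes as isolated vertices, so $A$ is fundamental only for $A=\{w_1,\dots,w_r\}$. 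Hence there are precisely $(4^m-1)+1=4^m$ fundamental sets. Finally $w_1$ is regular, since deleting it leaves $m$ disjoint triangles (if $r=1$) or $m$ triangles with $r-1$ apexes (if $r\ge 2$), in either case a graph without bipartite component. Therefore $\PP(G)$ has at least $4^m+1>4^{\lfloor d/3\rfloor}$ facets. The remaining cases $d=1,2$ are immediate: $\PP(P_3)$ is a segment with $2>4^0$ facets and $\PP(K_3)$ is a triangle with $3>4^0$ facets.

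The only genuine work is the fundamentality verification, and it is precisely this that forces the shape of the construction: triangles (rather than edges or longer paths) are used so that the residual graph $G[[n]\setminus(A\cup N(A))]$ is a disjoint union of odd cycles, and hence free of bipartite components for \emph{all} transversals at once, while the apexes are adjoined both to make $G$ connected and to keep $G[A,N(A)]$ connected for every transversal simultaneously. I anticipate no obstacle beyond carrying out this check carefully and confirming that the listed facet hyperplanes are pairwise distinct.
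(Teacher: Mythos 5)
Your proposal is correct and is essentially the paper's own argument: for $r=1$ your graph is exactly the windmill graph $\windmill(4,k)$ used in the paper, and the facet count via Theorem~\ref{thm:the_facets}(i) --- the $4^m-1$ transversal fundamental sets, one apex fundamental set, and at least one regular vertex --- is the same computation. The only (harmless) difference is that you adjoin $r\in\{1,2,3\}$ apexes so as to realize every dimension $d$ directly, where the paper instead reduces without loss of generality to $d=3k$.
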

  
  \begin{pspicture}(-0.25,0)(12,5)
  \pspolygon(7,3)(6.20,4.60)(5.40,4.60)(5.40,3.80)
  \psline(7,3)(5.40,4.60)
  \psline(5.40,3.80)(6.20,4.60)
  \pspolygon(7,3)(8.60,3.80)(8.60,4.60)(7.80,4.60)
  \psline(7,3)(8.60,4.60)
  \psline(8.60,3.80)(7.80,4.60)
  \pspolygon(7,3)(7.80,1.40)(8.60,1.40)(8.60,2.20)
  \psline(7,3)(8.60,1.40)
  \psline(8.60,2.20)(7.80,1.40)
  \pspolygon(7,3)(5.40,2.20)(5.40,1.40)(6.20,1.40)
  \psline(7,3)(5.40,1.40)
  \psline(5.40,2.20)(6.20,1.40)
  \put(7,3){\circle*{0.15}}
  \put(5.43,2.88){$3k+1$}
  \put(5.40,3.80){\circle*{0.15}}
  \put(5.00,3.60){$1$}
  \put(6.20,4.60){\circle*{0.15}}
  \put(6.40,4.60){$3$}
  \put(5.40,4.60){\circle*{0.15}}
  \put(5.00,4.60){$2$}
  \put(8.60,3.80){\circle*{0.15}}
  \put(8.80,3.60){$6$}
  \put(8.60,4.60){\circle*{0.15}}
  \put(8.80,4.60){$5$}
  \put(7.80,4.60){\circle*{0.15}}
  \put(7.40,4.60){$4$}
  \put(7.80,1.40){\circle*{0.15}}
  \put(7.50,0.95){$3i$}
  \put(8.60,1.40){\circle*{0.15}}
  \put(8.50,0.95){$3i-1$}
  \put(8.60,2.20){\circle*{0.15}}
  \put(8.50,2.40){$3i-2$}
  \put(5.40,2.20){\circle*{0.15}}
  \put(4.80,2.20){$3k$}
  \put(5.40,1.40){\circle*{0.15}}
  \put(4.30,0.95){$3k-1$}
  \put(6.20,1.40){\circle*{0.15}}
  \put(6.00,0.95){$3k-2$}
  \put(3.30,0){Figure 3: The windmill graph $\windmill{(4,k)}$. }
  \end{pspicture}
   
  \begin{proof}
  Without restriction we can assume that $d=3k$.  Let $G$ be the windmill graph $\windmill{(4,k)}$
  on the vertex set $[3k+1]$ with the edge set
  \begin{align*}
  E(G)= & \big \{ \{j,3k+1\} : j=1,\ldots,3k \} \ \cup \\
        & \big \{ \{3i-2,3i-1\},\{3i-2,3i\},\{3i-1,3i\}: i=1,\ldots,k \big \}.
  \end{align*}
  As $G$ is a connected non-bipartite graph, we have $\dim{(\PP(G))}=(3k+1)-1=d$. 
  We will now determine all fundamental sets in $G$. 
  Observe that a non-empty subset $A \subseteq [3k+1]$ is independent in $G$ if and only if 
  \begin{compactenum}[(i)]
  \item $A=\{3k+1\}$, or 
  \item $3k+1 \notin A$, and $|A \cap \{3i-2,3i-1,3i\}| \le 1$ for all $i=1,\ldots,k$.  
  \end{compactenum}
  We claim that such a set $A$ is fundamental in $G$.  
  There are two possible cases. 
  If $A=\{3k+1\}$, then $G[A,N(A)]$ is the graph with vertex set $[3k+1]$ and edge set
  $\big \{ \{j,3k+1\}: j=1,\ldots,3k \big \}$.  
  Since $A \cup N(A)=[3k+1]$ and $G[A,N(A)]$ is connected, we see that $A$ is fundamental in~$G$.  
  If $3k+1 \notin A$ and $|A \cap \{3i-2,3i-1,3i\}| \le 1$ for $i=\overline{1,k}$, then we can assume that $A=\{3,6,\ldots,3\ell\}$ for some $\ell \le k$. 
  In this case, we have
  \[ G[A,N(A)]=\big \{\{3s,3s-2\},\{3s,3s-1\},\{3s,3k+1\} : s=1,\ldots,\ell \big \}, \]
  \[ G[\overline{A \cup N(A)}]=\big \{ \{3t-2,3t-1\},\{3t-2,3t\}, \{3t-1,3t\} : t=\ell+1,\ldots,k \big \}. \]
  Since $G[A,N(A)]$ is connected and $G[\overline{A \cup N(A)}]$ has no bipartite components, $A$ is fundamental in $G$.  
  
  We next claim that $i \in [3k+1]$ is regular in $G$ if and only if $i \neq 3k+1$.  Indeed, we distinguish two cases.
  If $i=3k+1$, then the induced subgraph $G[[3k+1] {\setminus}i]$ is a disjoint union of $k$ triangles. It follows that $G[[3k+1] {\setminus} i]$ is not connected, and consequently $i$ is not regular in $G$.  \\
  If $i \neq 3k+1$, then we can assume that $i=1$.  In this case, $G[[3k+1] {\setminus}i]$ is the graph with vertex set $\{2,3,\ldots,3k+1\}$ and edge set
  \begin{align*}
  E=& \big \{ \{j,3k+1\} : j=4,\ldots,3k  \big \} \ \cup \\
    & \big \{ \{2,3\},\{2,3k+1\},\{3,3k+1\} \big \} \ \cup \\ 
    & \big \{ \{3i-2,3i-1\},\{3i-2,3i\},\{3i-1,3i\}: i=2,\ldots,k \big \}.
  \end{align*}
  We can see that this graph is connected and non-bipartite. 
  Hence $i$ is a regular vertex in~$G$, as desired. 
  Finally, applying Theorem~\ref{thm:the_facets} we conclude that the number of facets of $\PP(G)$ is $\sum_{\ell=1}^k \binom k \ell 3^{\ell} + 3k=4^k+3k-1$. 
  Therefore, $f(d) > 4^k = 4^{\lfloor d/3 \rfloor}$.
  \end{proof}
 
  As a consequence of these lemmas, we obtain the following bounds on~$f(d)$.
  
  \begin{theorem}\label{thm:exponential_facets}
  For all $d \ge 3$, the maximal number $f(d)$ of facets of a $d$-dimensional edge polytope satisfies
\[4^{\lfloor d/3 \rfloor} < f(d) \le 2^d+d.\]
  \end{theorem}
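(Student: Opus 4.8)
The statement is the conjunction of the two preceding lemmas, so the plan is simply to quote them. The upper bound $f(d)\le 2^d+d$ for $d\ge 3$ is exactly Lemma~\ref{upper bound}, and the lower bound $f(d)>4^{\lfloor d/3\rfloor}$, which Lemma~\ref{lower bound} establishes for all $d>0$, applies in particular when $d\ge 3$. Placing the two inequalities side by side gives $4^{\lfloor d/3\rfloor}<f(d)\le 2^d+d$ for every $d\ge 3$, as claimed; at the level of the theorem there is nothing more to write than ``combine Lemmas~\ref{upper bound} and~\ref{lower bound}.''

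It is worth recalling where the substance sits, since the theorem itself is a formality. The upper bound is extracted from the Ohsugi--Hibi description of the facets (Theorem~\ref{thm:the_facets}): in the bipartite case one bounds the number of acceptable sets lying in the smaller colour class by $2^{\lfloor n/2\rfloor}$ and adds the at most $n$ coordinate hyperplanes; in the non-bipartite case one uses that at most one of an independent set and its complement can be independent to bound the number of fundamental sets by $2^{n-1}-1$, again adding at most $n$ coordinate hyperplanes, and in both cases rewrites $n$ in terms of $d$ to reach $2^d+d$. The lower bound is supplied by the explicit windmill graphs $\windmill(4,k)$ (Figure~3), for which one checks $\dim\PP(\windmill(4,k))=3k$ and, again through the facet description, a facet count of the shape $4^k+O(k)$, comfortably exceeding $4^{\lfloor d/3\rfloor}$.

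There is no genuine obstacle at the level of the theorem: the two delicate ingredients -- pinning down the exact facet set of $\PP(G)$ and the enumeration for the windmill graph -- are already dealt with in the lemmas. The only point I would spell out is the passage ``without restriction $d=3k$'' used in the proof of Lemma~\ref{lower bound}, which is justified because $f$ is nondecreasing: given a connected graph $G$ with $\dim\PP(G)=d$, adjoining a single pendant vertex $v$ produces a connected graph $G'$ with $\dim\PP(G')=d+1$ whose edge polytope has at least as many facets. Indeed, $\{x_v=0\}\cap\PP(G')=\PP(G)$ is a facet of $\PP(G')$, so each facet of $\PP(G)$ becomes a ridge of $\PP(G')$ lying in that facet and hence in exactly one further facet, and distinct facets of $\PP(G)$ force distinct further facets by a dimension count. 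Thus $f(d)\ge f\!\left(3\lfloor d/3\rfloor\right)>4^{\lfloor d/3\rfloor}$ for all $d\ge 3$, and together with Lemma~\ref{upper bound} the theorem follows.
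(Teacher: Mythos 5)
Your proof is correct and matches the paper exactly: the theorem is stated there as an immediate consequence of Lemmas~\ref{upper bound} and~\ref{lower bound}, with no further argument. Your added justification of the reduction to $d=3k$ in Lemma~\ref{lower bound} (monotonicity of $f$ via adjoining a pendant vertex, so that $\{x_v=0\}\cap\PP(G')=\PP(G)$ is a facet and each old facet, now a ridge, picks up a distinct new facet) is a genuine and welcome patch for a step the paper leaves implicit.
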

 
% \appendix
 
 \section*{Appendix: Tur\'{a}n numbers}
 Here we give a tight asymptotic upper bound on the Tur\'an number $\ex(n,\mathcal{C}_{2k}^{\rm even} \cup \mathcal{C}_k)$, where $\mathcal{C}_{2k}^{\rm even}=\{C_4,C_6,\ldots,C_{2k}\}$ and $\mathcal{C}_k=\{C_3,C_4,\ldots,C_k\}$. We also show that every nearly extremal $(\mathcal{C}_{2k}^{\rm even}\cup \mathcal{C}_k)$-free graph is pseudorandom.
 
 The basic estimates for Tur\'{a}n numbers  
 for even cycles are obtained by counting various types of walks in graphs: 
 A \emph{non-returning walk of length} $k$ in $G$ is a sequence $v_0e_0v_1e_1 \ldots v_{k-1}e_{k-1}v_k$ such that $v_i \in V(G)$, $e_i=\{v_i,v_{i+1}\} \in E(G)$, and $e_i \neq e_{i+1}$ for $0 \le i <k$. 
 Let $\nu_k(G)$ denote the average number of non-returning walks of length $k$ in~$G$. If $G$ is a $d$-regular graph on $n$ vertices then clearly $\nu_k(G)=d(d-1)^{k-1}$. For irregular graphs, we have the following lower bound. 
 
 \begin{proposition}[Alon, Hoory \& Linial \cite{AHL}] \label{prop:moore_bound}
 If $G$ is a graph with minimum degree at least~$2$ and average degree $d$, then $\nu_k(G) \ge d(d-1)^{k-1}$. 
 \end{proposition}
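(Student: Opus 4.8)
The plan is to unfold the definition — $\nu_k(G)$ is exactly $1/n$ times the total number of non-returning walks of length $k$ in $G$ — and then argue by induction on $k$, with the minimum-degree hypothesis used to make a Cauchy--Schwarz estimate go through.

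First I would fix notation. Let $\vec E$ be the set of the $nd$ directed edges of $G$; for $\vec e\in\vec E$ let $r_k(\vec e)$ count the non-returning walks of length $k$ whose first directed edge is $\vec e$, and for a vertex $v$ set $p_k(v)=\sum_{\vec e\ \text{out of}\ v}r_k(\vec e)$, the number of non-returning walks of length $k$ starting at $v$. Then $\nu_k(G)=\tfrac1n m_k$, where $m_k:=\sum_{\vec e}r_k(\vec e)=\sum_v p_k(v)$, so the claim is $m_k\ge nd(d-1)^{k-1}$. We have $r_1\equiv1$ and $r_{k+1}\big((u,v)\big)=\sum_{w\sim v,\,w\ne u}r_k\big((v,w)\big)$, which for a $d$-regular graph collapses to $r_{k+1}=(d-1)r_k$ and turns the claimed inequality into an equality; so a proof must exploit irregularity \emph{against} itself. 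The case $k=2$ is the prototype: $m_2=\sum_v\deg v(\deg v-1)=\sum_v\deg^2 v-nd\ge\tfrac1n\big(\sum_v\deg v\big)^2-nd=nd(d-1)$, by Cauchy--Schwarz. I would use $\delta(G)\ge2$ throughout in the form $\deg v-1\ge1$; it is genuinely needed (for $P_3$ the conclusion already fails at $k=3$).

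Two structural facts drive the induction. First, since $\delta(G)\ge2$, every non-returning walk of length $k$ extends by one more step, so deleting the last vertex maps the length-$(k+1)$ walks starting at $v$ onto the length-$k$ walks starting at $v$; hence $p_{k+1}(v)\ge p_k(v)$ for all $v$. Writing $\sum_v p_a(v)=\sum_v\sqrt{p_a(v)p_b(v)}\cdot\sqrt{p_a(v)/p_b(v)}$ and using $p_a(v)/p_b(v)\le1$ for $a\le b$, Cauchy--Schwarz gives $\sum_v p_a(v)p_b(v)\ge m_a^2/n$. Second, splitting a non-returning walk of length $a+b$ at its central vertex and subtracting the pairs of half-walks that backtrack there yields, for $a,b\ge1$,
\[
  m_{a+b}\;=\;\sum_v p_a(v)\,p_b(v)\;-\;\sum_{\vec e}r_a(\vec e)\,r_b(\vec e).
\]
Combining these, for any split $k=a+b$ with $a\le b$ one gets $m_k\ge m_a^2/n-\sum_{\vec e}r_a(\vec e)r_b(\vec e)$, and I would try to close the induction from here, taking $a$ and $b$ as nearly equal as possible.

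The crux — and the step I expect to be genuinely hard — is the subtracted ``correction'' $\sum_{\vec e}r_a(\vec e)r_b(\vec e)$, which records how concentrated the walk counts are near high-degree vertices. Feeding in only the inductive lower bound $m_a\ge nd(d-1)^{a-1}$ is not enough: one is then left needing $\sum_{\vec e}r_a(\vec e)^2\le nd(d-1)^{2a-2}$, which is \emph{false} in general — already for the bowtie $F_2$ at $a=2$, where in fact the ratios $m_{k+1}/m_k$ are not monotone and dip below $d-1$, even though $m_k\ge nd(d-1)^{k-1}$ still holds at every step. One must instead carry through the induction a stronger hypothesis that also bounds the second moments $\sum_{\vec e}r_a(\vec e)^2$, i.e.\ that remembers the Cauchy--Schwarz surplus $\sum_v\deg^2 v-nd^2$ generated at the first step and tracks how it propagates along the non-returning-walk recursion (the degrees met along such a walk may be negatively correlated, as $F_2$ shows, so only a quantitative form survives). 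It is exactly at this point that $\delta(G)\ge2$ and the convexity of $x\mapsto x^2$ are used decisively; this is the Moore bound for irregular graphs of Alon, Hoory and Linial, whose convexity argument I would follow to finish.
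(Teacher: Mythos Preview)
The paper does not prove this proposition at all: it is quoted as a known result of Alon, Hoory and Linial \cite{AHL} and used as a black box in the proof of Theorem~\ref{thm:turan_number}. So there is no ``paper's own proof'' to compare your attempt against.

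As a standalone argument, your proposal is not a proof but an honest reconnaissance that stops at the hard step. The setup is fine, the $k=2$ base case is correct, and the split identity $m_{a+b}=\sum_v p_a(v)p_b(v)-\sum_{\vec e}r_a(\vec e)r_b(\vec e)$ is a reasonable thing to write down. But you yourself observe that the induction you set up does not close: the correction term $\sum_{\vec e}r_a(\vec e)r_b(\vec e)$ cannot be controlled from the inductive hypothesis alone, and you give the bowtie $F_2$ as a witness that the naive second-moment bound fails. Your last paragraph then says, in effect, ``here I would invoke the Alon--Hoory--Linial argument.'' That is circular: the whole task was to prove their result.

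For orientation, the actual proof in \cite{AHL} does \emph{not} proceed by splitting walks in the middle. It tracks, for each directed edge $\vec e$, the count $r_k(\vec e)$ and shows that a suitably weighted \emph{geometric} mean of the one-step growth ratios is at least $d-1$; the key inequality is $\prod_v (d_v-1)^{d_v}\ge (d-1)^{nd}$, which is exactly the convexity of $x\mapsto x\log(x-1)$ on $[2,\infty)$ --- and this is where $\delta(G)\ge 2$ is used. This sidesteps the second-moment obstruction you ran into: one never needs $m_{k+1}/m_k\ge d-1$ pointwise in $k$, only that the telescoping product is large enough. If you want to turn your sketch into a proof, that convexity lemma is the missing idea; the central-splitting route, as you noticed, does not seem to get there on its own.
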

 
 The following simple result will be very useful for our investigation. It is probably well-known, but we couldn't find a reference for it.
 
 \begin{lemma}
 \label{lem:union_paths}
 Suppose that $P$ and $Q$ are two different paths of length $k \ge 2$ with the same endpoints. If $P \cup Q$ is $\mathcal{C}_k$-free, then $P=\alpha P'\beta$ and $Q=\alpha Q'\beta$ for some vertex-disjoint paths $\alpha,\beta,P'$ and $Q'$.
 \end{lemma}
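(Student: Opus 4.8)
The plan is to build the decomposition from the longest common initial and final segments of the two paths. Let $\alpha$ be the maximal common initial subpath of $P$ and $Q$, ending at a vertex $a$, and let $\beta$ be the maximal common final subpath, starting at a vertex $b$ (either may be a single vertex). Let $P'$ and $Q'$ be the portions of $P$ and $Q$ running from $a$ to $b$, each of length $m := k - |\alpha| - |\beta|$. First I would check that $m \ge 2$: since $P \ne Q$ the two common segments cannot together cover all of $P$, so $m \ge 1$, and $m = 1$ would make $P'$ and $Q'$ both equal to the edge $\{a,b\}$ and hence force $P = Q$. Note also $m \le k$. Because $\alpha$, $P'$, $\beta$ are consecutive subpaths of the \emph{simple} path $P$ (and likewise of $Q$), they meet only in the shared endpoints $a$ and $b$; in particular $\alpha$ and $\beta$ are vertex-disjoint, and $\alpha$ meets $P' \cup Q'$ only in $a$ while $\beta$ meets it only in $b$. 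So the whole statement comes down to showing that $P'$ and $Q'$ have no common vertex outside $\{a,b\}$.

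To prove that, I would argue by contradiction: suppose $w \in (V(P') \cap V(Q')) \setminus \{a,b\}$. Since $P'$ and $Q'$ are simple paths from $a$ to $b$, $w$ is an interior vertex of each, say the $s$-th vertex of $P'$ and the $t$-th vertex of $Q'$ with $1 \le s,t \le m-1$. Let $R,S$ be the subpaths of $P',Q'$ from $a$ to $w$ (of lengths $s$ and $t$) and $R',S'$ the subpaths from $w$ to $b$ (of lengths $m-s$ and $m-t$); all four are nonempty. By maximality of $\alpha$ the edges of $P$ and $Q$ at $a$ differ, so $R \ne S$; by maximality of $\beta$ the edges of $P$ and $Q$ at $b$ differ, so $R' \ne S'$. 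The union of two distinct paths with the same endpoints always contains a cycle (otherwise it would be a forest, in which the endpoints would be joined by a unique path), so $R \cup S$ contains a cycle $C_1$ and $R' \cup S'$ contains a cycle $C_2$. Both $C_1$ and $C_2$ lie in $P' \cup Q' \subseteq P \cup Q$, which is $\mathcal{C}_k$-free, so each has length at least $k+1$; but $|C_1| \le |E(R)| + |E(S)| = s+t$ and $|C_2| \le (m-s) + (m-t)$. Hence $s+t \ge k+1$ and $2m-s-t \ge k+1$, and adding these gives $2m \ge 2k+2$, i.e.\ $m > k$ — contradicting $m \le k$. Therefore $P'$ and $Q'$ are internally disjoint, which completes the decomposition.

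There is no single deep step here; the point to be careful about is the bookkeeping at the intersection vertex $w$ — that it is genuinely interior to both $P'$ and $Q'$ (so that $R,S,R',S'$ are all nonempty and $s,t \in \{1,\dots,m-1\}$), and that the inequalities $R \ne S$, $R' \ne S'$ really follow from maximality of $\alpha$ and $\beta$ (the edge of $P$ leaving $a$, resp.\ entering $b$, is by maximality distinct from the corresponding edge of $Q$). A minor remark: the degenerate cases $|\alpha| = 0$ or $|\beta| = 0$ need no special handling, since trivial paths are permitted; in fact when $|\alpha| = |\beta| = 0$ one has $m = k$, and then the inequalities $s+t \ge k+1$ and $s+t \le 2m-(k+1) = k-1$ are already contradictory on their own, without the addition step.
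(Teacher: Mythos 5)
Your proof is correct, and it takes a genuinely different route from the paper's. The paper argues by induction on $k$: it splits into cases according to whether the neighbours of the endpoint $y$ on $P$ and on $Q$ coincide, kills one case by exhibiting a short cycle directly, and in the generic case identifies the two vertices adjacent to $x$ (and to $y$) into a single new vertex, reducing to two paths of length $k-2$ whose union is $\mathcal{C}_{k-2}$-free. You instead give a direct, non-inductive argument: fix $\alpha$ and $\beta$ as the maximal common prefix and suffix, and rule out a common internal vertex $w$ of the middles $P',Q'$ by observing that maximality forces $R\neq S$ and $R'\neq S'$, that each of these pairs of distinct paths with common endpoints yields a cycle, and that the two cycle lengths sum to at most $2m\le 2k$, so one of them lies in $\mathcal{C}_k$. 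All the supporting steps check out: $m\ge 2$, the disjointness of $\alpha,\beta$ from the middles via simplicity of $P$ and $Q$, the nonemptiness of $R,S,R',S'$, and the bound on the cycle lengths by the edge counts of the respective unions. What your approach buys is that it sidesteps the induction entirely, and in particular avoids the paper's somewhat delicate contraction step (where one must verify that the contracted objects are still paths and that their union is $\mathcal{C}_{k-2}$-free); the paper's approach, on the other hand, localizes all the work at the two endpoints and keeps each individual case very short.
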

 
 \begin{proof}
 The lemma is obviously true for $k \in \{2,3\}$. So let $k>3$ and proceed by induction. Let $x$ and $y$ be endpoints of $P$ and $Q$. We distinguish three cases.
 
 \noindent{\emph{Case 1:} ${N_P(y)=N_Q(y):=v}$}.\\
 Let $P_1$ and $Q_1$ be the subpaths of $P$ and $Q$ from $x$ to $v$, respectively. Then they are two different paths of length $k-1$ from $x$ to $v$, and their union is $\mathcal{C}_k$-free. By induction, $P_1=\alpha P'\beta$ and $Q_1=\alpha Q'\beta$ for some vertex-disjoint paths $\alpha,\beta,P',Q'$ with $x \in \alpha$ and $v \in \beta$. Hence $P=\alpha P'\beta y$ and $Q=\alpha Q' \beta y$. 
 
 \noindent{\emph{Case 2:} ${N_P(y)\neq N_Q(y) \ \text{and} \ N_P(y) \in Q}$}.\\
 In this case, $N_P(y)Qy$ is a cycle of length at most $k$. This contradicts the assumption that $P \cup Q$ is $\mathcal{C}_k$-free.
 
 \noindent{\emph{Case 3:} ${N_P(x),N_P(y)\notin Q \ \text{and} \ N_Q(x),N_Q(y) \notin P}$}.\\
 We identify $x,N_P(x)$ and $N_Q(y)$ (resp.~$y,N_P(y)$ and~$N_Q(y)$) as a new vertex $x'$ (resp.~$y'$). Let $P_2$ and $Q_2$ be the new paths corresponding to $P$ and $Q$. Then they are two different paths of length $k-2$ from $x'$ to $y'$. Since $P \cup Q$ is $\mathcal{C}_k$-free, $P_2 \cup Q_2$ is $\mathcal{C}_{k-2}$-free. By induction $P_2=\alpha' P''\beta'$ and $Q_2=\alpha'Q''\beta'$ for some disjoint paths $\alpha',\beta',P''$ and $Q''$. We claim that $x'$ is the only vertex of $\alpha'$. Otherwise, $\alpha'=x'u\alpha''$ for some vertex $u$ and path~$\alpha''$. In this case, the vertices $x,N_P(x),u$ and $N_Q(x)$ form a cycle of length $4$ in $P \cup Q$, which contradicts the fact that $P \cup Q$ is $\mathcal{C}_k$-free. Similarly, $y'$ is the only vertex of $\beta'$. From this it follow that $P$ and $Q$ are two internally vertex-disjoint paths. Consequently, they have the desired structures.
 \end{proof}
 
 Using the previous lemmas we can determine the Tur\'an number of the family $\mathcal{C}_{2k}^{\rm even} \cup \mathcal{C}_k$ up to a sublinear term.
 
 \begin{theorem} \label{thm:turan_number}
 Suppose $k\geq 2$ and $n \geq 1$. Then
 \[\ex{(n,\mathcal{C}_{2k}^{\rm even} \cup \mathcal{C}_k)} \le \tfrac{1}{2}n^{1+1/k} +\tfrac{k-1}{2k}n+n^{1-1/k}.\]
 Furthermore, if $k \in \{2,3,5\}$, then for infinitely many $n$
 \[\ex{(n,\mathcal{C}_{2k}^{\rm even} \cup \mathcal{C}_k)} \ge \tfrac{1}{2}n^{1+1/k} +\tfrac{k-1}{2k}n-n^{1-1/k}.\]
 \end{theorem}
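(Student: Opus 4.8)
The plan is to establish the upper and lower bounds by two separate arguments.

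\emph{Upper bound.} Let $G$ be a $(\mathcal{C}_{2k}^{\rm even}\cup\mathcal{C}_k)$-free graph on $n$ vertices; in particular $G$ has no cycle of length at most $k$, so its girth exceeds $k$. I would first record two structural facts about any $(\mathcal{C}_{2k}^{\rm even}\cup\mathcal{C}_k)$-free graph $H$ on $m$ vertices. (a)~\emph{Any two vertices lie on at most one path of length exactly $k$:} if $P\neq Q$ were two such paths, then $P\cup Q\subseteq H$ is $\mathcal{C}_k$-free, so Lemma~\ref{lem:union_paths} writes $P=\alpha P'\beta$, $Q=\alpha Q'\beta$ with the four pieces internally disjoint and $|P'|=|Q'|=:\ell$, where $1\le\ell\le k$; then $P'\cup Q'$ is a cycle of even length $2\ell$, which lies in $\mathcal{C}_{2k}^{\rm even}$ if $\ell\ge2$ and would be a double edge if $\ell=1$ — both impossible. (b)~\emph{Every non-returning walk of length $k$ in $H$ is a path:} if $i<j$ is a repetition $v_i=v_j$ with $j-i$ minimal, then $v_iv_{i+1}\cdots v_{j-1}v_i$ is a closed sub-walk of length $j-i\le k$ with $v_i,\dots,v_{j-1}$ distinct; this length cannot be $1$ (a loop) or $2$ (the two edges would coincide), so it is a cycle of length between $3$ and $k$ — again impossible. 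Combining (a) and (b), the number of directed non-returning walks of length $k$ in $H$ is at most $2\binom m2=m(m-1)$.

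I would then pass to minimum degree $\ge 2$: deleting vertices of degree $\le 1$ one at a time produces an induced (hence still $(\mathcal{C}_{2k}^{\rm even}\cup\mathcal{C}_k)$-free) subgraph $G'$ with $n'$ vertices, minimum degree $\ge 2$, average degree $d'=2e(G')/n'\ge 2$, and $e(G)\le e(G')+(n-n')$ since each deletion removes at most one edge (the cases $G'=\emptyset$ and small $n$ being checked by hand). Proposition~\ref{prop:moore_bound} and the walk count above give $n'd'(d'-1)^{k-1}\le n'(n'-1)$, i.e.\ $(d'-1)^k+(d'-1)^{k-1}\le n'-1$. Writing $w=n'^{1/k}$ and using Bernoulli's inequality to verify that $x^k+x^{k-1}\ge w^k$ already at $x=w-\tfrac1k+\tfrac1{kw}$, one deduces $d'<n'^{1/k}+\tfrac{k-1}{k}+\tfrac1k n'^{-1/k}$, hence $e(G')<\tfrac12 n'^{1+1/k}+\tfrac{k-1}{2k}n'+\tfrac1{2k}n'^{1-1/k}$. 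Finally I would check that $\psi(x):=\tfrac12 x^{1+1/k}+\tfrac{k-1}{2k}x+\tfrac1{2k}x^{1-1/k}-x$ satisfies $\psi'(x)\ge\psi'(1)=\tfrac{k-1}{2k^2}\ge 0$ for $x\ge 1$, so $\psi$ is nondecreasing and $e(G)<\psi(n')+n\le\psi(n)+n=\tfrac12 n^{1+1/k}+\tfrac{k-1}{2k}n+\tfrac1{2k}n^{1-1/k}$, which is below the claimed bound.

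\emph{Lower bound.} For $k\in\{2,3,5\}$, so $k+1\in\{3,4,6\}$, I would take the \emph{polarity graph} $G_q$ of a generalized $(k+1)$-gon of order $(q,q)$ — a projective plane, generalized quadrangle, or generalized hexagon — which exists for infinitely many prime powers $q$ (see \cite{ER},~\cite{Ben},~\cite{Sing}). Its vertices are the $n=1+q+\dots+q^k$ points of the geometry, each of degree $q+1$ except for the absolute points (degree $q$), and, since the incidence graph of a generalized $(k+1)$-gon has girth $2(k+1)$, the graph $G_q$ is $(\mathcal{C}_{2k}^{\rm even}\cup\mathcal{C}_k)$-free. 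Counting edges via $e(G_q)=\tfrac12\big((q+1)n-a\big)$, where $a$ is the number of absolute points, and expanding $n$ as a polynomial in $q$, one gets $e(G_q)\ge\tfrac12 n^{1+1/k}+\tfrac{k-1}{2k}n-n^{1-1/k}$ for these infinitely many $n$.

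The hardest part is the quantitative core of the upper bound: extracting the exact constant $\tfrac{k-1}{2k}$ from the Moore-type inequality $(d-1)^k+(d-1)^{k-1}\le n-1$ while simultaneously keeping the error from pruning low-degree vertices inside the allowed $n^{1-1/k}$ term. On the lower-bound side the delicate point is verifying that the polarity graphs of generalized polygons avoid exactly the cycles in $\mathcal{C}_{2k}^{\rm even}\cup\mathcal{C}_k$, which rests on standard but somewhat technical facts from finite incidence geometry.
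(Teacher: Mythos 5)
Your proposal is correct and follows essentially the same route as the paper: the upper bound combines the Moore-type lower bound on non-returning walks (Proposition~\ref{prop:moore_bound}) with the fact that, by Lemma~\ref{lem:union_paths} and the girth condition, each ordered pair of vertices supports at most one non-returning walk of length $k$; the lower bound invokes polarity graphs of generalized $(k+1)$-gons, which is exactly the Lazebnik--Ustimenko--Woldar construction the paper cites. You are in fact more explicit than the paper about two points it glosses over --- the reduction to minimum degree $\ge 2$ with control of the edge loss, and the algebra extracting the constant $\tfrac{k-1}{2k}$ from $(d-1)^k+(d-1)^{k-1}\le n-1$ --- and both check out.
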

 
 It is quite well known (K\H{o}v\'{a}ri et al.~\cite{KST} and Reiman \cite{Reiman}, cf.~\cite[Chap.~25]{Z58-4}) that $\ex(n,C_4)\le \lfloor \frac{n}{4}(1+\sqrt{4n-3})\rfloor$.
  On the other hand, Erd\H{o}s \& R\'{e}nyi \cite{ER} and Brown \cite{Brown} proved that for infinitely many positive integer~$n$ there is a $C_4$-free graph on $n$ vertices with $\frac{n-1}{4}(1+\sqrt{4n-3})$ edges. These results establish Theorem~\ref{thm:turan_number} in the case when $k=2$. It remains to handle the case when $k \ge 3$.
  
 \begin{proof}[Proof of Theorem~\ref{thm:turan_number}]
 As discussed above, it is enough to prove the theorem for $k \ge 3$. Let $G$ be a minimal counterexample to the theorem.
 Then $G$ has minimum degree at least~$2$ and average degree $d > n^{1/k} + \tfrac{k-1}{
 k}+2n^{-1/k}$. 
 We denote by $\PP_k$ the family of paths of length $k$ in $G$.  
 Since $G$ is $\mathcal{C}_k$-free, every non-returning walk of length $k$ is nothing but a path of length $k$.  
 It now follows from Proposition~\ref{prop:moore_bound} that $|\PP_k| \ge nd(d-1)^{k-1} > n^2$. 
 By Lemma~\ref{lem:union_paths}, for any ordered pair of vertices $u,v$ there is at most one path of length $k$ from $u$ to $v$. Thus, the number of paths of length $k$ is at most $n^2$, a contradiction.
  
 Let $\alpha \in \NN$. Set $q=2^{2\alpha +1}$ if $k=3$, and $q=3^{2\alpha+1}$ if $k=5$. Lazebnik et al. \cite{LUW} constructed a $(\mathcal{C}_{2k}^{\rm even} \cup \mathcal{C}_k)$-free graph $G$ on $n=q^k+\ldots + q+1$ vertices with $\frac{1}{2}\big\{(q+1)(q^k+\ldots +q+1)-(q^{\lfloor \frac{k+1}{2}\rfloor}+1)\big\}$ edges.
 We can verify that $e(G) \ge \frac{1}{2}n^{1+1/k} +\frac{k-1}{2k}n - n^{1-1/k}$ for large $n$. 
 \end{proof} 
 
 Another key ingredient in the proof of Corollary~\ref{cor:turan_polytope} is the notion of \emph{pseudorandomness}. We refer the reader to 
 Krivelevich \& Sudakov \cite{KS} for a survey.  
 The following result expresses the pseudorandomness property of a nearly extremal $(\mathcal{C}_{2k}^{\rm even} \cup \mathcal{C}_k)$-free graph: 
 For any two large sets the number of ordered edges between them is close to what one would expect in a random graph of the same edge density.

 \begin{theorem} \label{thm:pseudorandomness}
 Let $k \ge 2$ be a fixed integer. Suppose $G$ is a $(\mathcal{C}_{2k}^{\rm even} \cup \mathcal{C}_k)$-free graph on $n$ vertices with average degree $d \sim n^{1/k}$. 
 Then $e_G(S,T)= \frac{d}{n}|S||T|+o(n^{1+1/k})$ for any $S,T \subseteq V(G)$. 
 \end{theorem}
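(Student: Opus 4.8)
The plan is to deduce the edge‑distribution estimate from two structural facts about $G$ — a ``completeness at scale $k$'' (almost every ordered pair of vertices is joined by a unique path of length $k$) and an ``almost‑regularity'' valid for near‑extremal graphs in this family — and then to push the scale‑$k$ statement down to scale $1$ by peeling off single steps from length‑$k$ walks. I would begin with two harmless reductions. Since every subgraph of $G$ is $(\mathcal{C}_{2k}^{\rm even}\cup\mathcal{C}_k)$‑free, Theorem~\ref{thm:turan_number} gives $e(H)\le\frac12|H|^{1+1/k}+O(|H|)$ for each such $H$; applying this to the connected components of $G$ (using $\sum x_i^{1+1/k}\le(\sum x_i)^{1+1/k}$ for the sizes $x_i$) and to $G$ with its vertices of degree $<2$ deleted iteratively, the hypothesis $d\sim n^{1/k}$ forces $G$ to have one component of size $(1-o(1))n$ and, after removing only $o(n^{1+1/k})$ edges, minimum degree $\ge2$. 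Deleting $o(n^{1+1/k})$ edges changes $e_G(S,T)$ by $o(n^{1+1/k})$ and $d$ by $o(n^{1/k})$, so I may assume $G$ is connected, $\delta(G)\ge2$, and $d=(1+o(1))n^{1/k}$.

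Next I establish the scale‑$k$ completeness. As $G$ is $\mathcal{C}_k$‑free, every non‑returning walk of length $\ell\le k$ is a path; and by Lemma~\ref{lem:union_paths} together with $\mathcal{C}_{2k}^{\rm even}$‑freeness — two distinct paths of the same length $\ell$ with the same endpoints would share an initial and a terminal segment and hence enclose an even cycle of length $\le 2\ell\le 2k$ — there is at most one path of length $\ell$ between any ordered pair of distinct vertices, for every $2\le\ell\le k$. Hence the number of non‑returning walks of length $k$ equals $|R_k|$, where $R_k\subseteq V(G)^2$ is the relation ``joined by a path of length $k$''; by Proposition~\ref{prop:moore_bound} it is at least $nd(d-1)^{k-1}=(1-o(1))n^2$, and it is clearly at most $n^2$, so $|R_k|=(1-o(1))n^2$. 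Consequently $(V(G)^2)\setminus R_k$ has $o(n^2)$ pairs and, for all $S,T\subseteq V(G)$,
\[
W_k(S,T):=\#\{\text{non-returning walks of length }k\text{ from }S\text{ to }T\}=|R_k\cap(S\times T)|=|S||T|+o(n^2).
\]

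The heart of the argument is then twofold. First one needs almost‑regularity: a near‑extremal $(\mathcal{C}_{2k}^{\rm even}\cup\mathcal{C}_k)$‑free graph satisfies $\sum_v\deg(v)^2=(1+o(1))nd^2$, so all but $o(n)$ vertices have degree $(1+o(1))d$ and $\max_v\deg(v)=O(d)$; the same concentration propagates to the counts $\nu_\ell(v)$ of length‑$\ell$ non‑returning walks from $v$ for each fixed $\ell\le k$. This is exactly the place where the \emph{whole} excluded family is used (mere $C_4$‑freeness does not suffice): a graph whose degrees or short‑walk counts were spread out could be broken into pieces each still $(\mathcal{C}_{2k}^{\rm even}\cup\mathcal{C}_k)$‑free, and Theorem~\ref{thm:turan_number} on the pieces would contradict $d\sim n^{1/k}$. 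Second, the transfer from scale $k$ to scale $1$: writing a length‑$k$ non‑returning walk as a first edge followed by a length‑$(k-1)$ non‑returning walk, and unrolling the resulting recursion (in which ``forbid back‑tracking at the new first step'' reproduces the next lower‑order term), one expresses $W_k(S,T)$ as an alternating combination of sums $\sum_w\deg_T(w)\,|N^{(\ell)}(w)\cap S|$ over $\ell<k$; combining this with $W_k(S,T)=|S||T|+o(n^2)$, with $d(d-1)^{k-1}=(1+o(1))n$, and with almost‑regularity to control the finitely many lower‑order sums and the $o(1)$ factors, one bootstraps the pseudorandomness down the scales $k,k-1,\dots,1$ and arrives at $\sum_{s\in S}\big(\deg_T(s)-\tfrac dn|T|\big)^2=o(n^{1+1/k})$ for $|T|=\Theta(n)$; a final Cauchy–Schwarz step $\big|\sum_{s\in S}(\deg_T(s)-\tfrac dn|T|)\big|\le\sqrt{|S|}\big(\sum_{s\in S}(\deg_T(s)-\tfrac dn|T|)^2\big)^{1/2}=o(n^{1+1/k})$ gives $e_G(S,T)=\tfrac dn|S||T|+o(n^{1+1/k})$ (and the cases $|S|$ or $|T|$ of size $o(n)$ are trivial, as then both sides are $o(n^{1+1/k})$). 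Re‑inserting the deleted edges and the small components does not affect this.

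The hard part is the transfer step and the almost‑regularity it relies on: one must check that peeling off a single step — which multiplies by $(d-1)^{k-1}$ and spawns an alternating cascade of corrections at all lower scales — turns the additive error $o(n^2)$ at scale $k$ into an error $o(n^{1+1/k})$ at scale $1$ and no worse, which forces one to quantify ``almost‑regular'' precisely enough that an $o(1)\cdot nd^2$ slack survives the Cauchy–Schwarz step as $o(n^{1+1/k})$. It is worth noting that the cheap spectral route does not close this: the excluded cycles only let one bound $\operatorname{tr}(A^{2k})$, which yields a second eigenvalue of size $O(d)$ but no better, and $O(d)\cdot n$ is exactly of order $n^{1+1/k}$, not $o(n^{1+1/k})$ — so the expander mixing lemma alone is too weak and a combinatorial walk count really is needed.
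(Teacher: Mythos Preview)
Your final paragraph is where the proposal goes wrong: you dismiss the spectral route as too weak, but the paper's proof is \emph{exactly} that route. The flaw in your dismissal is the claim that the forbidden cycles ``only let one bound $\operatorname{tr}(A^{2k})$.'' In fact, after pruning to maximum degree $\Delta<(1+\varepsilon)d$ --- which uses the unique-path-of-length-$k$ property you yourself correctly derive from Lemma~\ref{lem:union_paths} --- the argument of \cite[Lemma~3.4]{KSV} bounds closed walks of length $2k+2$: one obtains $w_{2k+2}^\circ(G)=\tfrac{1}{n}\sum_i\lambda_i^{2k+2}<(1+o(1))n\Delta^2$. Since $\lambda_1\ge d$ already contributes $d^{2k+2}/n\sim nd^2$, the remaining eigenvalues satisfy $\sum_{i\ge2}\lambda_i^{2k+2}=o(n^2d^2)$, hence $|\lambda_i|\le o\bigl((n^2d^2)^{1/(2k+2)}\bigr)=o(n^{1/k})=o(d)$, and the expander mixing lemma (the non-bipartite version in \cite[Section~2.4]{KS}) then gives $e_G(S,T)=\tfrac{d}{n}|S||T|+o(dn)$. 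The step up from exponent $2k$ to $2k+2$ is precisely what rescues the spectral argument you declared insufficient.

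Your alternative combinatorial ``transfer from scale $k$ to scale $1$'' remains a programme rather than a proof: you do not show how the peeling recursion actually isolates $e_G(S,T)$ from the cascade of lower-scale corrections with the required precision, nor how almost-regularity alone closes it. There is also a quantitative slip: the intermediate bound $\sum_{s\in S}\bigl(\deg_T(s)-\tfrac{d}{n}|T|\bigr)^2=o(n^{1+1/k})$ cannot hold for all $T$ of size $\Theta(n)$ --- for the polarity graphs that witness near-extremality the second eigenvalue is $\Theta(\sqrt{d})$, and this sum is $\Theta(d|T|)=\Theta(n^{1+1/k})$ for generic $T$. The workable target for your Cauchy--Schwarz step is $o(n^{1+2/k})$; but that is equivalent to a second-eigenvalue bound of $o(d)$, which is exactly the spectral statement you rejected. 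Your preliminary reductions and the scale-$k$ completeness $W_k(S,T)=|S||T|+o(n^2)$ are correct and in the same spirit as the paper; the divergence, and the gap, is entirely in how one descends to the edge count.
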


 \begin{proof}[Sketch] 
 Our proof follows the lines of a remark of Keevash et al.\ \cite[Section~9]{KSV}. We just sketch the argument, and refer to \cite{KSV} for the omitted details.   
 Suppose that $G$ has eigenvalues $\lambda_1 \ge \lambda_2 \ge \cdots \ge \lambda_n$. 
 Let $w_{2k+2}^{\circ}(G)$ denote the number of closed walks of length $2k+2$ in $G$ divided by $n$.  
 Since $G$ is $(\mathcal{C}_{2k}^{\rm even} \cup \mathcal{C}_k)$-free, Lemma~\ref{lem:union_paths} shows that there is at most one path of length $k$ between any pair of vertices of $G.$ Using this property we control the maximum degree as $\Delta < (1+\varepsilon)d$ by deleting $o(n^{1+1/k})$ edges. 
 Then the argument of \cite[Lemma 3.4]{KSV} shows $w_{2k+2}^{\circ}(G)<(1+o(1))n\Delta^2;$ the only difference is that there are $n-1$ choices for $u$ rather than $n/2+o(n)$.  
 On the other hand, $w_{2k+2}^{\circ}(G)=\frac{1}{n}\sum \lambda_i^{2k+2}$ has a contribution of $d^{2k+2}/n \sim nd^2$ from the first eigenvalue, so the other eigenvalues are $o(d)$ as $\varepsilon \rightarrow 0$. 
 The pseudorandomness property now follows from the non-bipartite version of \cite[Lemma 5.4]{KSV}, which is provided in \cite[Section 2.4]{KS}.
 \end{proof}
 
 %%%%%%%%%%%%%%%%%%%%%%%%%%%%%%%%%%%%%%%%%%%%%%%%%%%%%%%%%%%%%%%%%%%%%%%%%%%
 \subsubsection*{Acknowledgements.} We are grateful to Raman Sanyal for helpful discussions, and Tibor Szab\'o for drawing our attention to the paper by Keevash, Sudakov $\&$ Verstra\"ete \cite{KSV}. We would like to thank an anonymous referee for very instructive comments and suggestions.
 
 %%%%%%%%%%%%%%%%%%%%%%%%%%%%%%%%%%%%%%%%%%%%%%%%%%%%%%%%%%%%%%%%%%%%%%%%%%%

\end{document}